\newtheorem{theorem}{Theorem}[section]
\newtheorem{lemma}[theorem]{Lemma}
\newtheorem{proposition}[theorem]{Proposition}
\newtheorem{corollary}[theorem]{Corollary}
\theoremstyle{definition}
\theoremstyle{remark}
\newtheorem{remark}[theorem]{Remark}
\newcommand{\R}{\mathbb{R}}
\newcommand{\N}{\mathbb{N}}
\newcommand{\cA}{\mathcal{A}}
\newcommand{\deu}{d_{\operatorname{eu}}}
\newcommand{\al}{\alpha}
\newcommand{\ga}{\gamma}
\newcommand{\om}{\omega}
\newcommand{\si}{\sigma}
\newcommand{\la}{\lambda}
\renewcommand{\phi}{\varphi}
\newcommand{\CAT}{\operatorname{CAT}}
\newcommand{\sm}{\setminus}
\newcommand{\sub}{\subset}
\begin{document}

\title{A Flat Strip Theorem for Ptolemaic Spaces}
\author{Renlong Miao \&  Viktor Schroeder}

\maketitle


\section{Main Result and Motivation}

A metric space $(X,d)$ is called {\it ptolemaic} or short a PT space, if for
all quadruples of points $x,y,z,w \in X$ the Ptolemy inequality

\begin{equation}\label{eq:PT}
|xy|\,|zw|\ \leq\ |xz|\,|yw|\ +\ |xw|\,|yz|
\end{equation}
holds, where $|xy|$ denotes the distance $d(x,y)$.

We prove a flat strip theorem for geodesic ptolemaic spaces.
Two unit speed geodesic lines $c_0, c_1 :\R\to X$ are called
{\it parallel}, if their distance is sublinear, i.e. if
$\lim_{t\to\infty} \frac{1}{t} d(c_0(t),c_1(t)) = \lim_{t\to -\infty} \frac{1}{t} d(c_0(t),c_1(t)) =0$.

\begin{theorem}\label{thm:main}
Let $X$ be a geodesic PT space which is homeomorphic to
$\R\times [0,1]$, such that the boundary curves are parallel
geodesic lines,
then
$X$ is isometric to a flat strip
$\R\times [0,a] \sub \R^2$ with its euclidean metric.
 \end{theorem}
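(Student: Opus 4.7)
My plan proceeds in two stages. First, upgrade the sublinearity of $f(t) := |c_0(t)c_1(t)|$ to the rigid statement that $f$ is a positive constant $a$. Second, build a bijective isometry $\Phi : \R \times [0,a] \to X$ using the transverse geodesic fibers between the two boundary lines.

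For Stage 1, the key tool is the Ptolemy inequality applied to the quadruple $(x,y,z,w) = (c_0(s), c_1(t), c_0(t), c_1(s))$, which reads
\[
|c_0(s)c_1(t)|\cdot|c_0(t)c_1(s)| \;\le\; (t-s)^2 + f(s)f(t).
\]
In a Euclidean rectangle, this is Ptolemy's classical equality for cyclic quadrilaterals. The triangle inequality provides a matching lower bound for the product on the left, and letting $s\to-\infty$ and $t\to+\infty$ while using the sublinearity of $f$ should pinch these bounds together, so that PT becomes asymptotically tight. The equality case of PT (attained in the Euclidean plane exactly for cocyclic quadruples) then forces the asymptotic configuration on the two boundary lines to look like a rectangle, giving $f(s) = f(t)$. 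Propagating this rigidity back from infinity yields $f \equiv a$.

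For Stage 2, I would use that a PT geodesic space is uniquely geodesic (branching is ruled out by the Ptolemy inequality applied to two near-branches and a far point), so for each $t$ there is a unique geodesic $\gamma_t : [0,a] \to X$ with $\gamma_t(0) = c_0(t)$ and $\gamma_t(a) = c_1(t)$. Setting $\Phi(t,s) := \gamma_t(s)$ gives a continuous bijection onto $X$, by the topological assumption $X \cong \R \times [0,1]$. To show $\Phi$ is an isometry, I would first verify that for each fixed $s \in [0,a]$ the horizontal curve $t \mapsto \Phi(t,s)$ is itself a geodesic line at constant distance from $c_0$; this reduces to Stage 1 applied to the vertical fibers $\gamma_{t_0}$ and $\gamma_{t_1}$ in place of the original boundary lines. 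Once all horizontal and vertical fibers are known to be mutually parallel geodesics, a final application of PT, now forced into equality, produces the Pythagorean identity
\[
|\Phi(t_1,s_1)\Phi(t_2,s_2)|^2 = (t_1-t_2)^2 + (s_1-s_2)^2.
\]

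The main obstacle is Stage 1. In CAT(0) geometry, constancy of the distance between parallel lines follows immediately from convexity of $t \mapsto |c_0(t)c_1(t)|$; no such convexity is available in a general PT space. One must instead exploit the Ptolemy inequality globally, combining estimates for long families of quadruples with the sublinear growth of $f$ at both infinities and with the topological strip hypothesis, and it is here that the real technical work of the proof must reside.
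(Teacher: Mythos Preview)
Your Stage~1 contains a genuine gap. The PT inequality for the quadruple $(c_0(s),c_1(t),c_0(t),c_1(s))$ gives the upper bound
\[
|c_0(s)c_1(t)|\cdot|c_0(t)c_1(s)|\ \le\ (t-s)^2+f(s)f(t),
\]
but the triangle inequality does \emph{not} supply a matching lower bound. From the triangle inequality you only get $|c_0(s)c_1(t)|\ge (t-s)-\min\{f(s),f(t)\}$, and the resulting lower bound on the product is dominated by $(t-s)^2$, so the PT inequality becomes vacuous and no information about $f(s)-f(t)$ survives, regardless of how you send $s,t\to\pm\infty$. What is actually needed is the footpoint bound
\[
|c_0(s)c_1(t)|\ \ge\ \max\{f(s),f(t)\},
\]
i.e.\ that $c_1(t)$ is the closest point on $c_1(\R)$ to $c_0(t)$. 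Once this is known, PT yields $\max\{f(s),f(t)\}^2\le (t-s)^2+f(s)f(t)$, hence $|f(t)-f(s)|\le (t-s)^2/\min f$, and a subdivision argument gives $f\equiv a$. But the footpoint property is precisely the hard step, and it is not a consequence of PT plus sublinearity alone.

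In the paper, this footpoint property (Lemma~\ref{lem:projinfibre}) is obtained only after a substantial detour: one first constructs parallel lines through every interior point, then performs a blow-up at $x_0$ and shows that the rescaled metrics converge to the Euclidean metric on $\R^2$ (Proposition~\ref{prop:diconvergesdeu}). That convergence relies on the Hitzelberger--Lytchak theorem (affine functions separating points force a normed-space structure) together with Schoenberg's theorem (a PT normed space is an inner product space). The resulting infinitesimal Pythagorean relation (Corollary~\ref{cor:pythformula}) is what forces the projection to land in the same Busemann fiber, and only then does the PT computation you sketched go through. Your Stage~2 inherits the same difficulty: showing that $t\mapsto\Phi(t,s)$ is a geodesic and deriving the global Pythagorean formula again require the infinitesimal information from the blow-up, not a direct equality case of PT. The appeal to ``PT forced into equality'' is not justified, since equality in PT characterizes cocyclic quadruples in Euclidean space but gives no a~priori rigidity in a general PT space.
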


We became interested in ptolemaic metric spaces because of their relation to the geometry
of the boundary at infinity of $\CAT (-1)$ spaces (compare \cite{FS1}, \cite{buys2}). We therefore think that these spaces
have the right to be investigated carefully.

Our paper is a contribution to the following question

\noindent {\bf Q:} Are proper geodesic ptolemaic spaces $\CAT (0)$-spaces?

We give a short discussion of this question at the end of the paper in section
\ref{sec:4pt}.
Main ingredients of our proof is a theorem of Hitzelberger and Lytchak \cite{HL} about isometric embeddings of
geodesic spaces into Banach spaces and the Theorem of Schoenberg \cite{Sch} characterizing
inner product spaces by the PT inequality.


\section{Preliminaries} \label{sec:preliminaries}

In this section we collect the most important basic facts about 
geodesic PT spaces which we will need in our arguments. If we do not provide proofs
in this section,
these can be found in
\cite{FLS}, \cite{FS2}.

Let $X$ be a metric space.
By $|xy|$ we denote the distance between points
$x,y \in X$.
We will always parametrize geodesics proportionally to arclength. Thus
a geodesic in $X$ is a map
$c:I\to X$ with
$|c(t)c(s)|= \la |t-s|$ for all $s,t \in I$ and some constant $\la \ge 0$.
A metric space is called geodesic if every pair of points can be joined by a geodesic.

In addition we will use the following convention in this paper.
If a geodesic is parametrized on $[0,\infty)$ or on
$\R$, the parametrization is {\em always} by arclenth.
A geodesic $c:[0,\infty)\to X$ is called a {\em ray}, a geodesic
$c:\R\to X$ is called a {\em line}.

In the sequel $X$ will always denote a geodesic metric space.
For $x,y \in X$ we denote by
$m(x,y)=\{z\in X \mid\, |xz|=|zy|=\frac{1}{2}|xy| \}$ the set of midpoints of $x$ and $y$.
A subset $C\sub X$ is {\em convex}, if for $x,y \in C$ also
$m(x,y)\sub C$.

A function $f:X \to \R$ is {\em convex} (resp. {\em affine}), if for all
geodesics $c:I\to X$ the map
$f\circ c:I \to \R$ is convex (resp. affine).

The space $X$ is called {\em distance convex} if for all
$p\in X$ the distance function
$d_p=|\cdot \ p|$ to the point $p$ is convex.
It is called {\em strictly distance convex}, if the functions $t \mapsto (d_p\circ c)(t)$ are 
strictly convex whenever $c:I \to X$ is a geodesic with
$|c(t)\,c(s)|>||p\,c(t)|-|p\,c(s)||$ for all $s,t\in I$, i.e., neither $c(t)$ and $c(s)$ being
on a geodesic from $p$ to the other.
This  definition is natural, since
the restriction of $d_p$ to a geodesic segment containing $p$ is never strictly convex.
The Ptolemy property easily implies: 

\begin{lemma} \label{lem-distance-convex}
A geodesic PT space is distance convex.
\end{lemma}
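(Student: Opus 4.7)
The plan is to verify convexity of $d_p\circ c$ for an arbitrary geodesic $c:I\to X$ by checking midpoint convexity at all pairs $s,t\in I$ and then promoting this to full convexity by continuity. Fix $p\in X$, a geodesic $c:I\to X$, and $s,t\in I$; set $x=c(s)$, $y=c(t)$, and let $m=c(\tfrac{s+t}{2})$. Because $c$ is parametrized proportionally to arclength, $m$ lies in $m(x,y)$, i.e., $|xm|=|my|=\tfrac{1}{2}|xy|$.

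The key step is to apply the Ptolemy inequality \eqref{eq:PT} to the quadruple $\{p,x,m,y\}$, which gives
\begin{equation*}
|pm|\,|xy|\ \leq\ |px|\,|my|\ +\ |py|\,|xm|\ =\ \tfrac{1}{2}|xy|\bigl(|px|+|py|\bigr).
\end{equation*}
When $|xy|>0$ we may divide by $|xy|$ and obtain the midpoint inequality
\begin{equation*}
(d_p\circ c)\!\left(\tfrac{s+t}{2}\right)\ \leq\ \tfrac{1}{2}\bigl((d_p\circ c)(s)+(d_p\circ c)(t)\bigr),
\end{equation*}
while the degenerate case $|xy|=0$ forces $x=y=m$ and the inequality is trivial.

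The remaining step is to upgrade midpoint convexity of $d_p\circ c$ to ordinary convexity. Since $d_p$ is $1$-Lipschitz and $c$ is Lipschitz, $d_p\circ c$ is continuous on $I$, and every continuous midpoint-convex function on an interval is convex; thus $d_p\circ c$ is convex, and as $c$ was arbitrary this is exactly the definition of distance convexity.

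I do not expect any serious obstacle here: the content of the lemma is essentially a one-line application of Ptolemy to the configuration $(p,x,\text{midpoint},y)$, exploiting the fact that the midpoint on a geodesic splits $|xy|$ in half so that the right-hand side of \eqref{eq:PT} factors through $|xy|$. The only mild subtlety is the passage from midpoint convexity to convexity, which is handled by the Lipschitz continuity of $d_p$.
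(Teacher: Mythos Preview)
Your argument is correct and is exactly the standard one the paper alludes to: the paper does not spell out a proof but simply remarks that ``the Ptolemy property easily implies'' the lemma, and the computation you give---applying \eqref{eq:PT} to $(p,x,m,y)$ to get $|pm|\,|xy|\le \tfrac12|xy|(|px|+|py|)$---is precisely that easy implication.
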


As a consequence, we obtain that for PT metric spaces local geodesics
are geodesics. Here we call a map $c:I\to X$ a {\em local geodesic}, if
for all $t\in I$ there exists a neighborhood $t\in I'\subset I$, such that
$c_{|I'}$ is a geodesic.

\begin{lemma}  \label{lem:localglobalgeodesic} $\cite{FS2}$
If $X$ is distance convex, then every local geodesic
is globally minimizing.
\end{lemma}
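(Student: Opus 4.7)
\medskip

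\noindent\textbf{Proof plan for Lemma \ref{lem:localglobalgeodesic}.}

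The plan is a standard open–closed argument on the domain of the local geodesic, with the extra mileage provided by distance convexity applied to small honest geodesic neighborhoods of the would-be obstruction. It suffices to treat a compact piece, so let $c:[a,b]\to X$ be a unit-speed local geodesic. Subdividing $[a,b]$ by a finite cover on which $c$ is a genuine geodesic and applying the triangle inequality along the partition shows that $c$ is globally $1$-Lipschitz; in particular $|c(a)c(t)|\le t-a$ for every $t\in[a,b]$. Introduce
\[
S \;=\; \bigl\{\,t\in[a,b] \;:\; |c(a)c(t)|=t-a\,\bigr\}.
\]
The local geodesic property at $a$ shows that $S\supset [a,a+\ep_0]$ for some $\ep_0>0$, and continuity of the distance shows that $S$ is closed.

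The core step is to show that $S$ is also open in $[a,b]$. Let $t_0=\sup S\in S$ and suppose toward contradiction $t_0<b$. Choose $\de>0$ small enough that $c\big|_{[t_0-\de,t_0+\de]}$ is an \emph{actual} geodesic (local geodesic property at $t_0$) and $\de\le t_0-a$. Since $t_0\in S$ and the global Lipschitz bound pinches the triangle inequality along $[a,t_0]$, one gets $|c(a)c(s)|=s-a$ for every $s\in[t_0-\de,t_0]$. Now apply distance convexity of $d_{c(a)}$ \emph{along the geodesic} $c|_{[t_0-\de,t_0+\de]}$: the function
\[
g(r) \;=\; |c(a)\,c(r)|,\qquad r\in[t_0-\de,t_0+\de],
\]
is convex. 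On the left half it is affine with slope $1$, so by convexity its right derivative at $t_0$ is at least $1$, forcing $g(t_0+\de)\ge t_0-a+\de$. Combined with the global Lipschitz bound $g(t_0+\de)\le(t_0+\de)-a$ this yields $g(t_0+\de)=t_0+\de-a$, so $t_0+\de\in S$, contradicting the definition of $t_0$. Hence $t_0=b$ and $|c(a)c(b)|=b-a$.

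To upgrade this to a genuine geodesic on all of $[a,b]$, note that for any $a\le s\le t\le b$ the global Lipschitz bound combined with $|c(a)c(b)|=b-a$ forces
\[
b-a \;=\; |c(a)c(b)| \;\le\; |c(a)c(s)|+|c(s)c(t)|+|c(t)c(b)| \;\le\; (s-a)+(t-s)+(b-t) \;=\; b-a,
\]
so all three middle terms realize their Lipschitz upper bounds; in particular $|c(s)c(t)|=t-s$. If the domain is non-compact (a ray or line), the statement follows by exhausting it with compact intervals. The one step where something genuinely must be checked is the right-derivative argument at $t_0$: distance convexity is only assumed along \emph{geodesics}, so it is essential that the local geodesic property lets us work on an honest geodesic neighborhood of $t_0$, which is exactly what allows us to cross from the ``already minimizing'' side to the other.
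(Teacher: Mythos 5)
The paper does not prove Lemma \ref{lem:localglobalgeodesic} itself but defers to \cite{FS2}; your open--closed argument --- establishing the global $1$-Lipschitz bound by subdivision, and then using convexity of $d_{c(a)}$ along a genuine geodesic neighborhood of $t_0=\sup S$ to propagate the slope-one behaviour past $t_0$ --- is correct and is essentially the standard proof found there. The only trivial point to add is that $\de$ should also be chosen $\le b-t_0$ so that $[t_0-\de,\,t_0+\de]\subset[a,b]$.
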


In \cite{FLS} we gave examples of PT spaces which are not strictly
distance convex.
However, if the space is proper, then the situation is completely different. \\

\begin{theorem} [\cite{FS2}] \label{thm:sdc} 
A proper, geodesic PT  space is strictly distance convex.
\end{theorem}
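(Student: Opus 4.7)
I argue by contradiction. Suppose $X$ is a proper, geodesic PT space that is not strictly distance convex. Then there exist a point $p\in X$ and a unit-speed geodesic $c:I\to X$ with $|c(s)\,c(t)| > \bigl||p\,c(s)|-|p\,c(t)|\bigr|$ for all distinct $s,t\in I$, yet $f:=d_p\circ c$ fails to be strictly convex. By Lemma~\ref{lem-distance-convex}, $f$ is convex, so failure of strict convexity produces a non-degenerate interval $[a,b]\subset I$ on which $f$ is affine of some slope $\beta$; the standing hypothesis on $c$ applied at $(s,t)=(a,b)$ forces $|\beta|<1$.

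Affinity of $f$, together with $|c(s_i)\,c(s_j)|=|s_i-s_j|$, implies that for every $s_1<s_2<s_3$ in $[a,b]$,
\[
|p\,c(s_2)|\cdot|c(s_1)\,c(s_3)| \;=\; |p\,c(s_1)|\cdot|c(s_2)\,c(s_3)| + |p\,c(s_3)|\cdot|c(s_1)\,c(s_2)|,
\]
i.e.\ the Ptolemy inequality for $(p,c(s_1),c(s_2),c(s_3))$ is \emph{saturated} for every triple from $c\big|_{[a,b]}$. The plan is to leverage this saturation, together with properness, to conclude that $p$ lies on a geodesic extending $c\big|_{[a,b]}$; this forces the slope of $f$ on $[a,b]$ to be $\pm 1$, contradicting $|\beta|<1$.

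Concretely, for each $t\in[a,b]$ I would choose (using properness) a geodesic $\sigma_t:[0,f(t)]\to X$ from $p$ to $c(t)$, and extract convergent subsequences via Arzela--Ascoli, which is applicable thanks to properness. Then I would apply the Ptolemy inequality to quadruples consisting of $p$, an interior point of some $\sigma_t$, and two nearby points of $c$; combining these with the saturated equalities above and the distance convexity of $d_{c(s)}$ along each $\sigma_t$, the hope is to pinch $\bigcup_{t\in[a,b]}\sigma_t([0,f(t)])$ into a flat Euclidean model, from which the required collinearity of $p$ with $c\big|_{[a,b]}$ would follow. The rigidity step is the main obstacle: pointwise Ptolemy equality is a ``soft'' algebraic condition that alone does not force collinearity --- indeed \cite{FLS} exhibits non-proper PT spaces that fail to be strictly distance convex --- so the argument must use properness essentially, via compactness of bounded sets and Arzela--Ascoli limits, to promote the saturated equalities into the desired Euclidean rigidity.
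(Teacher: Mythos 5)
There is a genuine gap. Your reduction is fine: failure of strict distance convexity does yield a point $p$ and a non-degenerate interval $[a,b]$ on which $f=d_p\circ c$ is affine with slope $|\beta|<1$, and your observation that affinity of $f$ saturates the Ptolemy inequality for every quadruple $(p,c(s_1),c(s_2),c(s_3))$ with $s_1<s_2<s_3$ in $[a,b]$ is correct (it is exactly the equality case of the computation proving Lemma~\ref{lem-distance-convex}). But everything after that is a declaration of intent rather than an argument: you say you ``would'' take limits of geodesics $\sigma_t$ from $p$ to $c(t)$, that ``the hope is to pinch'' the union of these geodesics into a flat model, and you yourself identify the rigidity step --- promoting the saturated equalities into collinearity of $p$ with $c|_{[a,b]}$ --- as ``the main obstacle.'' That obstacle is the entire content of the theorem; no mechanism is supplied for overcoming it, and it is not clear that the configuration $\{p\}\cup\bigcup_t\sigma_t$ can be forced into a Euclidean model at all from the stated ingredients. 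As written, the proposal proves nothing beyond the (already known) convexity statement.

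For contrast, the paper's proof does not try to establish global rigidity of the cone over the affine segment. It works infinitesimally at a midpoint: assuming $|pm|=\tfrac12(|px|+|py|)$ for a midpoint $m$ of $x,y$, it introduces points $x_t,y_t$ at parameter $t$ toward $p$ along geodesics $px$ and $py$, approximate midpoints $w_t$ on geodesics $x_ty_t$ converging to $m$, and then combines \emph{three} Ptolemy inequalities --- for $(p,x_t,m,y_t)$, for $(x,x_t,w_t,m)$, and for $(m,w_t,y_t,y)$ --- with the elementary fact that the one-sided derivative ratios $a_x(0),a_y(0)$ of the $1$-Lipschitz convex functions $t\mapsto|mx_t|-a$, $t\mapsto|my_t|-a$ lie in $[-1,1]$. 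The resulting inequality $(d-c)a_x(0)+(b-c)a_y(0)\ge 2a$ can only hold in the extremal case, which forces $|xy|=\bigl||px|-|py|\bigr|$ and contradicts the hypothesis. Properness enters only to produce the limit midpoint $m$ and the convergence $w_{t_i}\to m$, not through any Arzel\`a--Ascoli compactness of whole families of geodesics. If you want to salvage your approach, you would need to supply a concrete rigidity lemma replacing the ``pinching'' step; the paper's derivative-based bookkeeping is one way to do that, and I do not see how your outline produces an alternative.
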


Since we have a relatively short proof of this result, we present the proof
in section \ref{sec:str_conv} .

\begin{corollary} [\cite{FLS}] \label{cor:uniquemidpoint} 
Let $X$ be a proper, geodesic PT space.
Then for $x,y \in X$ there exists a unique midpoint
$m(x,y)\in X$.
The midpoint function
$m:X\times X\to X$ is continuous.
\end{corollary}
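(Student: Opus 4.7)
The plan is to derive uniqueness of midpoints from strict distance convexity (Theorem \ref{thm:sdc}), and then to use properness to extract convergent subsequences for continuity. Existence of a midpoint is automatic since $X$ is geodesic: take a geodesic from $x$ to $y$ and look at its parameter midpoint.

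For uniqueness, I would argue by contradiction. Suppose $m_1\ne m_2$ are two midpoints of $x,y$ and let $c:[0,L]\to X$ be a unit-speed geodesic from $m_1$ to $m_2$. The first step is to observe that every point of $c$ is itself a midpoint of $x,y$: by Lemma \ref{lem-distance-convex} both $d_x\circ c$ and $d_y\circ c$ are convex, so each is bounded above by $\tfrac12|xy|$ on $[0,L]$; combined with the triangle inequality
\[
|xy|\ \le\ d_x(c(t))+d_y(c(t))\ \le\ \tfrac12|xy|+\tfrac12|xy|,
\]
both functions must be identically $\tfrac12|xy|$. In particular $d_x\circ c$ is constant. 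Now I would invoke strict distance convexity. For any $s,t\in[0,L]$ we have $|xc(s)|=|xc(t)|=\tfrac12|xy|$, so $\bigl||xc(s)|-|xc(t)|\bigr|=0$; since $c$ is a nonconstant isometric parametrization, $|c(s)c(t)|=|s-t|>0$ whenever $s\ne t$. Hence the transversality hypothesis $|c(s)c(t)|>\bigl||xc(s)|-|xc(t)|\bigr|$ of Theorem \ref{thm:sdc} holds throughout $[0,L]$, forcing $d_x\circ c$ to be strictly convex. This contradicts the constancy of $d_x\circ c$, so $m_1=m_2$.

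For continuity, let $x_n\to x$, $y_n\to y$, and set $m_n:=m(x_n,y_n)$. Then $|xm_n|\le|xx_n|+|x_nm_n|=|xx_n|+\tfrac12|x_ny_n|\to\tfrac12|xy|$, so the sequence $\{m_n\}$ is bounded, hence by properness relatively compact. Any subsequential limit $m^\ast$ satisfies $|xm^\ast|=|ym^\ast|=\tfrac12|xy|$ by continuity of the metric, so $m^\ast$ is a midpoint of $x,y$ and therefore, by the uniqueness just proven, equals $m(x,y)$. As every subsequence of $\{m_n\}$ has a further subsequence converging to the same point $m(x,y)$, the whole sequence converges to $m(x,y)$.

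The main obstacle is the uniqueness step, and specifically checking that the ``transversality'' hypothesis in the definition of strict distance convexity is satisfied along a geodesic joining two putative midpoints; once that is in place, Theorem \ref{thm:sdc} delivers the contradiction, and continuity is then a soft compactness argument using properness.
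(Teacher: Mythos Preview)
Your proof is correct and matches the paper's intended approach: the paper does not spell out a proof but states that the corollary follows easily from strict distance convexity (Theorem~\ref{thm:sdc}) together with properness, and in the proof of Theorem~\ref{thm:sdc} in Section~\ref{sec:str_conv} remarks that ``using this, it is not hard to see that the midpoint is unique.'' Your argument---showing that a geodesic between two putative midpoints consists entirely of midpoints, so that $d_x$ is constant along it, and then deriving a contradiction from strict convexity---is exactly the natural way to fill in this gap, and your continuity argument via properness and subsequential limits is the standard soft compactness argument the paper alludes to.
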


\begin{corollary} [\cite{FS2}] \label{cor:projection} 
Let $X$ be a proper, geodesic PT space, and $A \subset X$ be a closed and
convex subset.
Then there exists a continuous
projection
$\pi_A:X\to A$.
\end{corollary}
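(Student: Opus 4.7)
Plan. The corollary asserts three properties of the nearest-point map $\pi_A$: existence, uniqueness, and continuity. The argument follows the standard Chebyshev-projection recipe, translated into our geodesic setting: properness of $X$ gives compactness for existence and for the subsequence trick in continuity, strict distance convexity (Theorem~\ref{thm:sdc}) replaces strict convexity of the norm in the uniqueness step, and Corollary~\ref{cor:uniquemidpoint} lets us realize the geodesic between two candidate nearest points inside $A$. The main technical subtlety is in uniqueness, where midpoint-convexity of $A$ must be combined with closedness in order to put the \emph{whole} geodesic between two candidate projections back into $A$.

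\emph{Existence.} For $x\in X$, take a minimizing sequence $a_n\in A$ with $|x\,a_n|\to|x\,A|$. Eventually the $a_n$ lie in the closed ball $\bar B(x,|x\,A|+1)$, which is compact by properness; since $A$ is closed, a subsequence converges to some $a_\infty\in A$ with $|x\,a_\infty|=|x\,A|$. Set $\pi_A(x):=a_\infty$.

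\emph{Uniqueness.} Suppose $a_1\neq a_2$ are both nearest points of $A$ to $x$; in particular $x\notin A$. Iterating the unique-midpoint construction of Corollary~\ref{cor:uniquemidpoint} produces the unique geodesic $c:[0,1]\to X$ from $a_1$ to $a_2$: midpoint-convexity of $A$ places every dyadic interior point of $c$ in $A$, and closedness of $A$ fills in the rest, so $c([0,1])\subset A$. The real-valued function $t\mapsto|x\,c(t)|$ is convex by Lemma~\ref{lem-distance-convex}, is bounded below by $|x\,A|$ (since $c\subset A$), and equals $|x\,A|$ at $t=0,1$; convexity therefore forces it to be identically $|x\,A|$. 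But a non-trivial geodesic along which the distance to $x$ is constant cannot be a sub-segment of a geodesic through $x$ (along such a geodesic the distance function would be affine of slope $\pm 1$, not constant), so Theorem~\ref{thm:sdc} applies and yields that $t\mapsto|x\,c(t)|$ is \emph{strictly} convex, contradicting constancy. Hence $a_1=a_2$.

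\emph{Continuity.} Given $x_n\to x$, the $1$-Lipschitz estimate $\bigl||x_n\,A|-|x\,A|\bigr|\le|x_n\,x|$ shows $|x_n\,A|\to|x\,A|$, so the sequence $\pi_A(x_n)$ stays in a bounded subset of $X$. By properness some subsequence converges to a point $a\in A$, and continuity of distance gives $|x\,a|=|x\,A|$, so $a=\pi_A(x)$ by the uniqueness just established. The argument is independent of the chosen subsequence, so the whole sequence converges to $\pi_A(x)$, and $\pi_A$ is continuous.
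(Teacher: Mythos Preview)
Your argument is correct and is exactly the standard nearest-point-projection proof one would expect: existence by properness, uniqueness via strict distance convexity (Theorem~\ref{thm:sdc}) applied to the geodesic between two putative projections, and continuity by the usual subsequence/uniqueness trick. The paper itself does not give a proof of this corollary but simply imports it from \cite{FS2}, so there is nothing to compare against here; your write-up would serve perfectly well as the missing proof.
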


\begin{remark}
For $\CAT(0)$ spaces this projection is always $1$-Lipschitz.
We do not know if $\pi_A$ is
$1$-Lipschitz for general proper geodesic PT spaces.
\end{remark}

The strict convexity of the distance function
together with the properness implies
easily (cf. Corollary \ref{cor:uniquemidpoint})

\begin{corollary}  \label{cor:contdepinitialpoints}
Let $X$ be a proper, geodesic PT
space and let $x,y \in X$.
Then there exists a unique geodesic
$c_{xy}:[0,1]\to X$ from $x$ to $y$ and
the map
$X\times X\times [0,1] \to X$,
$(x,y,t)\mapsto c_{xy}(t)$ is continuous.
\end{corollary}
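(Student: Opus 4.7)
The plan is to establish uniqueness and continuity separately, with Theorem~\ref{thm:sdc} (and its consequence Corollary~\ref{cor:uniquemidpoint} on unique continuous midpoints) doing the heavy lifting for uniqueness, and the Arzel\`a--Ascoli theorem combined with properness handling continuity.

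First I would verify uniqueness. Suppose $c_1, c_2 : [0,1] \to X$ are two geodesics from $x$ to $y$. Both of the points $c_1(1/2), c_2(1/2)$ are midpoints of $x$ and $y$, so by Corollary~\ref{cor:uniquemidpoint} they coincide. Iterating this argument on each half produces equality on all dyadic rationals in $[0,1]$, and then continuity yields $c_1 = c_2$ on $[0,1]$. Existence of $c_{xy}$ is built into the hypothesis that $X$ is geodesic.

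For continuity, take a sequence $(x_n, y_n, t_n) \to (x,y,t)$ in $X \times X \times [0,1]$ and consider the geodesics $c_n := c_{x_n y_n} : [0,1] \to X$. Each $c_n$ is Lipschitz with constant $|x_n y_n|$, which is bounded by some $K$ for large $n$. Moreover $|c_n(s)\,x_n| \le |x_n y_n|$ implies $c_n([0,1]) \sub \bar B(x, 2K)$ for large $n$, a compact set by properness. Hence $\{c_n\}$ is equicontinuous and pointwise relatively compact, so by Arzel\`a--Ascoli some subsequence $c_{n_k}$ converges uniformly to a curve $c_\infty : [0,1] \to X$. Passing to the limit in $|c_n(s)\,c_n(s')| = |x_n y_n|\,|s-s'|$ gives $|c_\infty(s)\,c_\infty(s')| = |xy|\,|s-s'|$, so $c_\infty$ is a geodesic from $x$ to $y$. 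By the uniqueness just proved, $c_\infty = c_{xy}$. The same argument applies to every subsequence, so the full sequence $c_n$ converges uniformly to $c_{xy}$; together with $t_n \to t$ this gives $c_n(t_n) \to c_{xy}(t)$, which is the required joint continuity.

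The only non-routine point is ensuring that the Arzel\`a--Ascoli limit is actually a geodesic rather than merely a Lipschitz curve, but this is immediate from passing to the limit in the isometric parametrization identity. Thus properness (for compactness of the ambient balls) and strict distance convexity (for uniqueness of midpoints, hence of geodesics) are the two essential inputs, both already available from Theorem~\ref{thm:sdc} and Corollary~\ref{cor:uniquemidpoint}.
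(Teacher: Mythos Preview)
Your argument is correct and matches the paper's intended approach: the paper does not spell out a proof but merely remarks that the result follows easily from strict distance convexity together with properness, pointing to Corollary~\ref{cor:uniquemidpoint}. Your dyadic-midpoint argument for uniqueness and Arzel\`a--Ascoli argument for continuity are exactly the standard way to unpack that remark.
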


We call two rays
$c ,c':[0,\infty)\to X$  {\em asymptotic}, if
$\lim_{t\to \infty}\frac{1}{t}|c(t)c'(t)|=0$.

\begin{corollary} \label{cor:uniqueasymptray}
 Let $X$ be a proper geodesic PT space and
$c_1,c_2:[0,\infty) \to X$ asymptotic rays with the same initial point $c_1(0)=c_2(0)=p$.
Then $c_1$=$c_2$.
\end{corollary}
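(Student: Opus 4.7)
The plan is to argue by contradiction: if $c_1\neq c_2$, the asymptoticity hypothesis will force a nontrivial geodesic segment to lie on a sphere about $p$, contradicting the strict distance convexity of Theorem~\ref{thm:sdc}. The bridge between these is a Busemann-type function of $c_2$: for $T\ge 0$, set
\[
\phi_T(x)\;:=\;T-|x\,c_2(T)|,\qquad b(x)\;:=\;\lim_{T\to\infty}\phi_T(x).
\]
By Lemma~\ref{lem-distance-convex} each $d_{c_2(T)}$ is convex along geodesics, so $\phi_T$ is concave on $X$; it is also $1$-Lipschitz with $\phi_T(p)=0$, and the triangle inequality gives $\phi_T(x)\le|xp|$ together with the monotonicity of $T\mapsto\phi_T(x)$. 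Hence $b$ is a well-defined concave $1$-Lipschitz function on $X$ satisfying $b(p)=0$ and $b(c_2(s))=s$.

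The heart of the argument is to establish $b(c_1(s))=s$ for every $s\ge 0$. The bound $b(c_1(s))\le s$ is immediate from $|c_1(s)c_2(T)|\ge T-s$. For the reverse bound, set $f(T):=|c_1(T)c_2(T)|$; the triangle inequality via $c_1(T)$ yields $\phi_T(c_1(s))\ge s-f(T)$ for $T\ge s$, and choosing $T=s$ gives $b(c_1(s))/s\ge 1-f(s)/s$, so the asymptoticity hypothesis produces $\liminf_{s\to\infty}b(c_1(s))/s\ge 1$. Since $b\circ c_1$ is concave with $(b\circ c_1)(0)=0$, the ratio $b(c_1(s))/s$ is non-increasing in $s>0$, so its infimum equals its limit, which is therefore at least $1$; combined with the pointwise bound $\le 1$ this forces $b(c_1(s))\equiv s$.

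Now assume, for contradiction, $c_1(t)\neq c_2(t)$ for some $t>0$, and let $\gamma:[0,f(t)]\to X$ be a unit-speed geodesic from $c_1(t)$ to $c_2(t)$. The concave function $b\circ\gamma$ has boundary values $t$, hence $b\circ\gamma\ge t$ on the whole segment, and the $1$-Lipschitz property of $b$ with $b(p)=0$ then gives $|p\gamma(u)|\ge b(\gamma(u))\ge t$; on the other hand, Lemma~\ref{lem-distance-convex} applied to $d_p\circ\gamma$, which has boundary values $t$, gives $|p\gamma(u)|\le t$. So $d_p\circ\gamma$ is identically equal to $t$. But for $u\neq u'$ in $[0,f(t)]$ one has $|\gamma(u)\gamma(u')|=|u-u'|>0=||p\gamma(u)|-|p\gamma(u')||$, so Theorem~\ref{thm:sdc} forces $d_p\circ\gamma$ to be strictly convex, contradicting its constancy. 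Hence $c_1(t)=c_2(t)$ for every $t>0$, and $c_1=c_2$.

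The main obstacle is the identity $b(c_1(s))=s$: in the $\CAT(0)$ setting one shortcuts this via the convexity of $s\mapsto|c_1(s)c_2(s)|$, a tool not available in the PT setting, so one has to interlace the concavity of $b\circ c_1$ (which ultimately rests on distance convexity) with the sublinear control provided by asymptoticity through the ``non-increasing ratio'' trick. Once this identity is secured, the contradiction with strict distance convexity is a clean three-line argument.
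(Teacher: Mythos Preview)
Your proof is correct and takes a genuinely different route from the paper's.

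The paper argues by contradiction as well, but applies the Ptolemy inequality directly: assuming $x=c_1(t_0)\neq c_2(t_0)=y$, it takes the midpoint $m=m(x,y)$, uses Theorem~\ref{thm:sdc} to get $|pm|<t_0$, and then applies the PT inequality to the quadruple $x,y,c_1(t_0+t),c_2(t_0+t)$ to control the ``cross-distances'' $|x\,c_2(t_0+t)|$ and $|y\,c_1(t_0+t)|$; a short triangle-inequality chain through $m$ then gives the contradiction. Your argument, by contrast, never invokes the PT inequality explicitly: it uses only its consequences Lemma~\ref{lem-distance-convex} and Theorem~\ref{thm:sdc}. You package the asymptotic information into the (sign-reversed) Busemann function $b$, exploit the concavity of $b\circ c_1$ via the non-increasing-ratio trick to pin down $b(c_1(s))=s$, and then produce a nontrivial geodesic segment lying entirely on the sphere $\{d_p=t\}$, which Theorem~\ref{thm:sdc} forbids. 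The paper's approach is shorter and showcases the four-point inequality in action; yours is more conceptual, isolates exactly which features of PT geometry are needed (convexity of $d_p$ and its strict form), and makes the geometric obstruction---a segment on a metric sphere---very transparent.
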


\begin{proof}
 Assume that there exists $t_0 > 0$ such that
$x=c_1(t_0)\neq c_2(t_0)=y$. Let $m=m(x,y)$.
By Theorem \ref{thm:sdc} we have
$|pm|<t_0$. Let $\delta = t_0-|pm|>0$.
For $t>t_0$ consider the points
$x,y,x_t=c_1(t_0+t), y_t=c_2(t_0+t)$.
Note that $\frac{1}{t}|x_ty_t|\to 0$ by assumption.
We write
$|xy_t|=t+\alpha_t$ with $0\le \alpha_t$ and
$|yx_t|=t+\beta_t$ with $0\leq \beta_t$.
The PT inequality applied to the four points gives
$$(t+\alpha_t)(t+\beta_t)\le t^2+ |xy|\,|x_ty_t|$$
and thus $(\alpha_t+\beta_t)\le \frac{1}{t}|x_ty_t|\,|xy| \to 0$.
Thus for $t$ large enough $\alpha_t\le \delta$.
Therefore $|y_tm|\le \frac{1}{2}(|y_tx|+|y_ty|)\le t+\delta/2$,
which gives the contradiction
$(t+t_0)=|py_t|\le|pm|+|my_t|\le (t+t_0-\delta/2)$.
\end{proof}


We now collect some results on the Busemann functions of asymptotic 
rays and parallel line. 

$X$ denotes always a geodesic PT space.
Let $c:[0,\infty) \to X$ be a geodesic ray.
As usual we define the {\em Busemann function}
$b_c(x) = \lim_{t\to\infty}(|xc(t)|-t)$.
Since $b_c$ is the limit of the convex functions $d_{c(t)} -t$,
it is convex.

The following proposition implies that, in a PT space, asymptotic rays define (up to
a constant) the same Busemann functions.

\begin{proposition} [\cite{FS2}] \label{prop:sublinearrays} 
Let $X$ be a PT space, let $c_1,c_2:[0,\infty) \to X$ be asymptotic
rays with
Busemann functions $b_i:=b_{c_i}$.
Then 
$(b_1-b_2)$ is constant.
\end{proposition}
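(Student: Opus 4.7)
The plan is to apply the Ptolemy inequality directly to the quadruple $\{x,y,c_1(t),c_2(t)\}$ for fixed $x,y\in X$, exploit the sublinear growth of $|c_1(t)c_2(t)|$, and read off the desired identity from an asymptotic expansion in $t$.

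First I would record the standard fact that for any ray $c$ the function $t\mapsto |xc(t)|-t$ is non-increasing (by the triangle inequality) and bounded below by $-|xc(0)|$, so $b_c(x)$ exists and we have the pointwise expansion $|xc(t)|=t+b_c(x)+o(1)$ as $t\to\infty$.

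For fixed $x,y\in X$, I would then apply the two versions of the Ptolemy inequality to $\{x,y,c_1(t),c_2(t)\}$ in which $|xy|\,|c_1(t)c_2(t)|$ appears on the right-hand side. These combine to give the two-sided bound
$$\bigl| |xc_1(t)|\,|yc_2(t)| - |xc_2(t)|\,|yc_1(t)| \bigr|\ \le\ |xy|\,|c_1(t)c_2(t)|.$$
Since $c_1$ and $c_2$ are asymptotic, the right-hand side is $o(t)$. Substituting $|xc_i(t)|=t+b_i(x)+o(1)$ and the analogous expansion for $y$ into the left-hand side, the $t^2$ terms cancel and the coefficient of $t$ simplifies to $[b_1(x)-b_2(x)]-[b_1(y)-b_2(y)]$, with the remaining contributions of order $O(1)+o(t)$. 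Dividing through by $t$ and letting $t\to\infty$ forces $[b_1(x)-b_2(x)]=[b_1(y)-b_2(y)]$, so $b_1-b_2$ is constant on $X$.

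The only subtlety, rather than a real obstacle, is selecting the correct pair of Ptolemy inequalities: one must use the two that bound the difference of ``cross'' products, not the one that has $|xy|\,|c_1(t)c_2(t)|$ on the left. Note that the argument uses only the PT inequality and the defining property of asymptotic rays, so neither properness nor strict distance convexity is needed, which is consistent with the minimal hypotheses of the proposition.
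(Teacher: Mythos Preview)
Your argument is correct. The asymptotic expansion $|xc_i(t)|=t+b_i(x)+o(1)$ is standard, the two Ptolemy inequalities you select do yield the two-sided bound
\[
\bigl|\,|xc_1(t)|\,|yc_2(t)|-|xc_2(t)|\,|yc_1(t)|\,\bigr|\le |xy|\,|c_1(t)c_2(t)|,
\]
and the computation of the linear-in-$t$ coefficient on the left is exactly
\[
t\bigl[(b_1(x)-b_2(x))-(b_1(y)-b_2(y))\bigr]+o(t),
\]
so dividing by $t$ and letting $t\to\infty$ gives the claim. No properness or geodesicity is used, as you note.

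As to comparison: the present paper does not actually supply a proof of this proposition; it is quoted from \cite{FS2}. Your approach is, however, entirely in the spirit of the one argument the paper \emph{does} spell out for asymptotic rays, namely the proof of Corollary~\ref{cor:uniqueasymptray}, which likewise applies the Ptolemy inequality to a quadruple containing $c_1(t)$ and $c_2(t)$ for large $t$ and exploits $|c_1(t)c_2(t)|=o(t)$. So while there is nothing to compare against in this paper, your method is the natural one and almost certainly coincides with the proof in \cite{FS2}.
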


Let now $c:\R \to X$ be a geodesic line parameterized by arclength.
Let $c^\pm:[0,\infty)\to X$ be the rays
$c^+(t)=c(t)$ and $c^-(t)=c(-t)$.
Let further $b^\pm:=b_{c^\pm}$.

\begin{lemma} [\cite{FS2}] \label{lem:pos} 
$(b^++b^-)\ge 0$ and
$(b^++b^-)=0$ on the line $c$.

\end{lemma}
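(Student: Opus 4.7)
The plan is to give a direct proof from the definition of the Busemann function, using only the triangle inequality together with the fact that $c$ is a unit speed line on all of $\R$; neither strict distance convexity nor the full PT inequality is needed for this particular statement.

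First I would establish $(b^++b^-)\ge 0$. For any $x\in X$, combining the two defining limits into one gives
\[
b^+(x)+b^-(x)\;=\;\lim_{t\to\infty}\bigl(|x\,c(t)|+|x\,c(-t)|-2t\bigr).
\]
The triangle inequality yields $|x\,c(t)|+|x\,c(-t)|\ge |c(-t)\,c(t)|=2t$, since $c$ is parametrized by arclength on all of $\R$. Hence the expression inside the limit is nonnegative for every $t$, and nonnegativity is preserved in the limit.

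Next I would verify that $(b^++b^-)$ vanishes on $c$ itself by evaluating both Busemann functions at an arbitrary point $c(s)$. For $t>|s|$ the arclength parametrization gives $|c(s)\,c(t)|=t-s$ and $|c(s)\,c(-t)|=t+s$ exactly, so
\[
b^+(c(s))=\lim_{t\to\infty}\bigl((t-s)-t\bigr)=-s,\qquad b^-(c(s))=\lim_{t\to\infty}\bigl((t+s)-t\bigr)=s,
\]
and their sum is $0$.

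The main obstacle is essentially nonexistent: both halves reduce to the triangle inequality once the two limits are added together in the right way, and no appeal to the Ptolemy inequality or to convexity is required. The PT hypothesis is of course crucial elsewhere in the preliminaries — for instance in Proposition \ref{prop:sublinearrays}, which identifies the Busemann functions of asymptotic rays up to an additive constant — but Lemma \ref{lem:pos} itself is a purely metric statement that holds in any geodesic space admitting the line $c$.
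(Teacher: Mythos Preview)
Your argument is correct: both assertions follow directly from the triangle inequality and the arclength parametrization of $c$, exactly as you wrote. The paper does not supply its own proof of this lemma but simply cites \cite{FS2}; your elementary derivation is the standard one and is precisely what one expects the cited proof to contain.
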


We now consider Busemann functions for 
parallel lines.

\begin{proposition} [\cite{FS2}] \label{prop:sublinearlines} 
Let $c_1,c_2:\R\to X$ be parallel lines with
with Busemann functions
$b_1^{\pm}$ and $b_2^{\pm}$.
Then
$(b_1^++b_1^-)=(b_2^++b_2^-)$.
\end{proposition}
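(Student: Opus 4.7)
The plan is to reduce the problem to the already-established result for asymptotic rays (Proposition \ref{prop:sublinearrays}) and then pin down the remaining additive constant using Lemma \ref{lem:pos}.

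First, I would unpack what parallelism of $c_1,c_2$ says about the associated rays. Setting $c_i^+(t) = c_i(t)$ and $c_i^-(t) = c_i(-t)$ for $t \geq 0$, the parallelism hypothesis $\lim_{t\to\pm\infty}\tfrac{1}{t}|c_1(t)c_2(t)| = 0$ says exactly that $c_1^+$ is asymptotic to $c_2^+$ and $c_1^-$ is asymptotic to $c_2^-$ in the sense used before Corollary~\ref{cor:uniqueasymptray}. Applying Proposition \ref{prop:sublinearrays} to each pair yields constants $A,B \in \R$ with
\[
 b_1^+ - b_2^+ = A, \qquad b_1^- - b_2^- = B.
\]
Therefore the difference $(b_1^+ + b_1^-) - (b_2^+ + b_2^-) = A + B$ is a global constant, and the entire problem reduces to showing $A+B = 0$.

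The second step evaluates this constant on each of the two lines. Write $g_i := b_i^+ + b_i^-$. By Lemma~\ref{lem:pos}, $g_1,g_2 \geq 0$ everywhere, $g_1 \equiv 0$ along $c_1$, and $g_2 \equiv 0$ along $c_2$. Plugging $c_1(s)$ into the identity $g_1 - g_2 \equiv A+B$ gives $A+B = -g_2(c_1(s)) \leq 0$, while plugging in $c_2(s)$ gives $A+B = g_1(c_2(s)) \geq 0$. Hence $A+B = 0$ and the proposition follows.

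There is essentially no obstacle here beyond being careful about which asymptoticity statement applies to which pair of rays (in particular, that the backward rays are asymptotic uses the $t \to -\infty$ half of the parallelism hypothesis, which is why the definition of parallelism is stated two-sidedly). As a byproduct of the argument, the Busemann function $g_i$ on one line attains its minimum value $0$ on the other as well, which foreshadows the flat-strip structure that Theorem~\ref{thm:main} is after.
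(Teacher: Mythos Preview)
Your argument is correct. The paper itself does not supply a proof of this proposition, deferring instead to \cite{FS2}; your reduction via Proposition~\ref{prop:sublinearrays} to a single constant $A+B$ and the two-sided sign estimate coming from Lemma~\ref{lem:pos} is exactly the standard route and almost certainly coincides with the argument in the cited reference.
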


\begin{corollary} \label{cor:samebusemannfunction}
 If $c_1,c_2:\R \to X$ are parallel lines. Then there are reparametrizations
of $c_1,c_2$ such that $b_1^+=b_2^+$ and $b_1^-=b_2^-$.
\end{corollary}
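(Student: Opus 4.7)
The plan is to reduce the statement to a single shift of parameter on one of the lines and to use the two Busemann-function identities (for asymptotic rays and for parallel lines) already established earlier in the excerpt.

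First, I would observe that the parallelism assumption implies that the forward rays are asymptotic to each other and likewise for the backward rays. Indeed, if $c_1, c_2:\R\to X$ are parallel, then $\frac{1}{t}|c_1(t)c_2(t)|\to 0$ as $t\to\infty$ shows that $c_1^+$ and $c_2^+$ are asymptotic in the sense defined in the excerpt; taking $t\to-\infty$ and reparametrizing gives that $c_1^-$ and $c_2^-$ are asymptotic. Hence Proposition \ref{prop:sublinearrays} gives constants $\alpha, \beta\in\R$ with
\[
b_1^+-b_2^+\equiv \alpha,\qquad b_1^--b_2^-\equiv \beta.
\]

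Second, I would check how a shift of parameter affects the two Busemann functions of a line. If $\tilde c(t):=c(t+s)$, then $\tilde c^+(t)=c(t+s)$, so
\[
b_{\tilde c^+}(x)=\lim_{t\to\infty}\bigl(|x\,c(t+s)|-t\bigr)=b_{c^+}(x)+s,
\]
and an identical calculation for $\tilde c^-(t)=c(-(t+s))$ yields $b_{\tilde c^-}=b_{c^-}-s$. Thus a single shift of $c_2$ by $s$ replaces $(b_2^+,b_2^-)$ by $(b_2^++s,\,b_2^--s)$.

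Finally, to match $b_1^\pm$, I would solve $s=\alpha$ and $-s=\beta$ simultaneously. These are compatible iff $\alpha+\beta=0$, which, in view of the constancy above, is precisely the statement
\[
(b_1^++b_1^-)-(b_2^++b_2^-)\equiv 0,
\]
and this is exactly the content of Proposition \ref{prop:sublinearlines}. So $\alpha=-\beta$, and choosing $s=\alpha$ (leaving $c_1$ untouched and setting $\tilde c_2(t):=c_2(t+s)$) produces the desired reparametrizations with $b_1^+=b_{\tilde c_2^+}$ and $b_1^-=b_{\tilde c_2^-}$. The only nontrivial point is the compatibility of the two scalar equations in the single unknown $s$; this is not really an obstacle, since Proposition \ref{prop:sublinearlines} is designed precisely to furnish it.
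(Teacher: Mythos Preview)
Your argument is correct and is essentially the paper's own proof spelled out in more detail: both shift $c_2$ so that $b_1^+=b_2^+$ (using Proposition~\ref{prop:sublinearrays}) and then invoke Proposition~\ref{prop:sublinearlines} to conclude $b_1^-=b_2^-$. One small slip: $\tilde c^-(t)=\tilde c(-t)=c(-t+s)$, not $c(-(t+s))$, though your stated conclusion $b_{\tilde c^-}=b_{c^-}-s$ is nonetheless the correct one.
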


\begin{proof}
 Since $b_1^+-b_2^+$ is constant by Proposition \ref{prop:sublinearrays}
we can obviously shift the parametrization of $c_2$ such that
$b_1^+=b_2^+$. It follows now from Proposition \ref{prop:sublinearlines}
that then also $b_1^-=b_2^-$.
\end{proof}

\begin{corollary} \label{cor:parallelaffine}
 Let $X$ be a geodesic space which is foliated by parallels to
to a line $c:\R\to X$; i.e. for any point $x\in X$ there
exists a line $c_x$ parallel to $c$ with $x=c_x(0)$.
Then the Busemann functions $b^{\pm}$ of $c$ are affine.
\end{corollary}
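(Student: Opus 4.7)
The plan is to deduce the affinity of $b^\pm$ from the stronger pointwise identity $b^+ + b^- \equiv 0$ on $X$. Both Busemann functions are already known to be convex, being pointwise limits of the convex distance functions $d_{c^\pm(t)} - t$. Hence once $b^- = -b^+$ is established, $b^+$ will be simultaneously convex and concave, and therefore affine along every geodesic; the analogous conclusion for $b^-$ is immediate.

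To obtain the identity $b^+ + b^- \equiv 0$, I will transport Lemma \ref{lem:pos} from the single line $c$ to every point of $X$ via the foliation, using Corollary \ref{cor:samebusemannfunction} as the transport mechanism. Fix an arbitrary $x \in X$, and let $c_x$ be the line through $x = c_x(0)$ parallel to $c$ provided by the foliation hypothesis. By Corollary \ref{cor:samebusemannfunction}, after a suitable reparametrization of $c_x$ its Busemann functions coincide identically with $b^+$ and $b^-$. Applying Lemma \ref{lem:pos} to the reparametrized line $c_x$ then yields $(b^+ + b^-) \equiv 0$ along all of $c_x$, and in particular $(b^+ + b^-)(x) = 0$. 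Since $x \in X$ was arbitrary, this gives $b^+ + b^- \equiv 0$ on all of $X$.

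The only subtle point is recognizing the reduction to $b^- = -b^+$; after that, the argument is essentially a bookkeeping exercise combining Lemma \ref{lem:pos} on each leaf of the foliation with the parallel-line normalization of Busemann functions furnished by Corollary \ref{cor:samebusemannfunction}. I do not expect any nontrivial obstacle, and notably the argument does not require properness of $X$ — only the geodesic (PT) structure implicit in the cited results.
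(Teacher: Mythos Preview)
Your proof is correct and follows essentially the same route as the paper: show $b^+ + b^- \equiv 0$ by transporting Lemma~\ref{lem:pos} to each leaf of the foliation, then conclude that the convex functions $b^\pm$ are affine. The only cosmetic difference is that the paper invokes Proposition~\ref{prop:sublinearlines} directly (giving $b^+ + b^- = b_x^+ + b_x^-$ without reparametrization), whereas you pass through its consequence Corollary~\ref{cor:samebusemannfunction}; the substance is identical.
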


\begin{proof}
We show that $b^++b^-=0$.
Let therefore $x\in X$ and let $b_x^{\pm}$ be the Busemann functions of $c_x$.
By Proposition \ref{prop:sublinearlines} $b^++b^-=b_x^++b_x^-$. Now
$(b_x^++b_x^-)(x)=0$, hence $(b^++b^-)(x)=0$.
Thus the sum of the two convex fuction $b^+$ and $b^-$ ist affine.
It follows that $b^+$ and $b^-$ are affine.
\end{proof}

More generally the following holds:

\begin{corollary} \label{cor:parallelaffine2}
 Let $c:\R\to X$ be a line, then the Busemann functions $b^{\pm}$ are affine
on the convex hull of all points contained on lines parallel to $c$.
\end{corollary}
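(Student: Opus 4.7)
The plan is to mimic the argument of Corollary \ref{cor:parallelaffine} but relative to the appropriate level set of $b^+ + b^-$, rather than the whole space. Specifically, I will first identify every point lying on a line parallel to $c$ as a zero of the convex function $b^+ + b^-$, then upgrade this to the convex hull using convexity plus the nonnegativity from Lemma \ref{lem:pos}, and finally deduce affineness of each $b^{\pm}$ from the fact that their sum vanishes on this convex set.

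First I would record the pointwise vanishing: if $x \in X$ lies on a line $c_x$ parallel to $c$, then after reparametrizing $c_x$ as in Corollary \ref{cor:samebusemannfunction} (or using Proposition \ref{prop:sublinearlines} directly), we have $b^+ + b^- = b_x^+ + b_x^-$ as functions on $X$. Since $x$ lies on the line $c_x$, Lemma \ref{lem:pos} gives $(b_x^+ + b_x^-)(x) = 0$, hence $(b^+ + b^-)(x) = 0$. So the set
$$Z := \{x \in X \mid (b^+ + b^-)(x) = 0 \}$$
contains every point on a line parallel to $c$.

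Next I would show $Z$ is convex. The function $f := b^+ + b^-$ is convex (as a sum of Busemann functions, which are convex) and satisfies $f \geq 0$ everywhere by Lemma \ref{lem:pos}. For $x, y \in Z$ and any $z \in m(x,y)$, convexity of $f$ along a geodesic through $x, z, y$ gives $f(z) \leq \tfrac{1}{2}(f(x) + f(y)) = 0$, while $f(z) \geq 0$ forces $f(z) = 0$. Thus $m(x,y) \subset Z$, so $Z$ is convex, and therefore contains the convex hull $H$ of all points on lines parallel to $c$.

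Finally, on $H \subset Z$ we have $b^+ + b^- \equiv 0$. For any geodesic $\gamma : I \to X$ whose image lies in $H$, both $b^+ \circ \gamma$ and $b^- \circ \gamma$ are convex functions on $I$, but $b^+ \circ \gamma = -b^- \circ \gamma$ is then also concave. Hence each is affine on $I$, which is precisely the statement that $b^{\pm}$ are affine on $H$. I do not foresee a real obstacle: the only point requiring care is that we never need to know that $H$ itself is foliated by parallels (as in Corollary \ref{cor:parallelaffine}); the convex-hull level-set argument bypasses this and replaces it by the elementary observation that a nonnegative convex function whose zero set contains a given set automatically vanishes on its convex hull.
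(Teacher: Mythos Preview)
Your proposal is correct and follows essentially the same approach as the paper's proof: show $b^++b^-$ vanishes on every parallel line (via Proposition~\ref{prop:sublinearlines} and Lemma~\ref{lem:pos}), use convexity together with $b^++b^-\ge 0$ to propagate this vanishing to the convex hull, and then conclude that each of $b^+,b^-$ is affine there since their sum is identically zero. The paper's proof is just a terser version of exactly this argument.
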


\begin{proof}
 Indeed the above argument shows that $b^++b^-$ is equal to $0$ on all
parallel lines. Since $b^++b^-$ is convex and $\ge 0$ by Lemma \ref{lem:pos},
$b^++b^-=0$ on the convex hull of all parallel lines. Thus $b^+$ and $b^-$
are affine on this convex hull.
\end{proof}

\section{Proof of the Main Result}

We prove a slightly stronger version of the main Theorem, namely:

\begin{theorem}\label{thm:maingeneralversion}
Let $X$ be a geodesic PT space which is topologically a connected 2-dimensional manifold
with boundary $\partial X$, such that the  
the boundary consists of two parallel geodesic lines. 
Then
$X$ is isometric to a flat strip
$\R\times [0,a] \sub \R^2$ with its euclidean metric.
 \end{theorem}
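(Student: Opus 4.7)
The plan is to produce an isometric embedding of $X$ into the Euclidean plane by using the Busemann data of the two boundary lines to supply affine coordinates and then invoking the two ingredients cited in the introduction.

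First I would apply Corollary~\ref{cor:samebusemannfunction} to reparametrize $c_0$ and $c_1$ so that their Busemann functions coincide, $b^{\pm} := b_0^{\pm} = b_1^{\pm}$. By Lemma~\ref{lem:pos} the convex function $f := b^+ + b^-$ is nonnegative on $X$ and vanishes identically on $\partial X = c_0 \cup c_1$. Along any geodesic joining a point of $c_0$ to a point of $c_1$, $f$ is convex and nonnegative with zero values at both endpoints, hence $f \equiv 0$ along that geodesic. The two-manifold hypothesis enters decisively here: I would argue that every interior point of $X$ lies on some such crossing geodesic, via a topological sliding-arc / intermediate-value argument in the topological strip $X \cong \R \times [0,1]$ (any continuous curve from $c_0$ to $c_1$ is homotopic rel endpoints to a vertical segment, and one produces a covering family of geodesics from $c_0(s)$ to $c_1(s')$ whose crossing points with a horizontal slice sweep out the whole slice). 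Granting this, $f \equiv 0$ on $X$, so $b^+$ and $b^-$ become convex functions with affine sum, hence affine on $X$; in particular, every point of $X$ lies on a line parallel to $c_0$.

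With $X$ foliated by parallels to $c_0$ and equipped with the globally affine coordinate $b^+$, together with a transverse affine coordinate supplied by the foliation, the hypotheses of Hitzelberger--Lytchak \cite{HL} are met, yielding an isometric embedding $\iota : X \to V$ into a Banach space. Because $X$ is a topological $2$-manifold, the image spans a $2$-dimensional normed subspace $W \sub V$, and since $\iota(X) \sub W$ inherits the PT inequality on a set of quadruples dense enough in $W$ (as $\iota(X)$ is genuinely planar), Schoenberg's theorem \cite{Sch} extends this by continuity to all of $W$, forcing the norm on $W$ to come from an inner product. Thus $X$ embeds isometrically into $\R^2$, and the image---a planar region with boundary two parallel Euclidean lines---must be a flat strip $\R \times [0,a]$.

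The crux is the topological sweeping in the second paragraph: since $X$ is not assumed proper, continuity and uniqueness of geodesics are not free, so the two-manifold-with-boundary hypothesis must be used directly to cover $X$ by geodesics joining its two boundary components. A secondary concern is verifying that the affine data produced after that step is sufficiently rich to feed into the Hitzelberger--Lytchak machinery, but the pair $(b^+, \text{transverse coordinate on the foliation})$ should suffice once the foliation by parallels is in hand.
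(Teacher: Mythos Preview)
Your overall strategy is natural, and the first step---showing $b^+ + b^- \equiv 0$ by covering $X$ with geodesics joining the two boundary components, hence making $b^{\pm}$ affine---matches the paper, which does this more concretely by exhibiting the map $h(t,s)=h_t(s)$ (with $h_t$ the geodesic from $c_0(t)$ to $c_1(t)$) as a homeomorphism onto $X$ and invoking Corollary~\ref{cor:parallelaffine2}. However, you have misidentified the crux of the argument.

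The genuine difficulty is precisely what you dismiss as a ``secondary concern'': producing a \emph{second} affine function transverse to $b^+$. Hitzelberger--Lytchak requires enough affine functions to separate points, and $b^+$ alone is only one. The transverse coordinate you propose---the parameter along the foliation by parallels, or the fibre distance---has no a priori reason to be affine; proving that it is affine is essentially equivalent to the theorem itself. In $X$ the transverse geodesics $h_t$ are finite segments, not lines, so they carry no Busemann function of their own, and there is nothing in the PT axioms that forces a ``height above $c_0$'' function to restrict linearly to an arbitrary geodesic.

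The paper circumvents this by working infinitesimally. At each interior point $x_0$ it sets up a bilipschitz chart $F_{x_0}=(B_{t_0},A_{s_0})$, rescales the metric by $\la_i\to\infty$, and shows that any subsequential limit $(\R^2,d_\om)$ has the property that the transverse segments have become full parallel \emph{lines}; their common Busemann function is then affine by Corollary~\ref{cor:parallelaffine}, and only at this stage do \cite{HL} and \cite{Sch} apply, forcing $d_\om=\deu$. This infinitesimal Euclidean structure yields the Pythagorean derivative formula of Corollary~\ref{cor:pythformula}, which---combined with a fibre-preservation property of the nearest-point projection onto parallels and a PT-based equidistance argument (Lemmas~\ref{lem:projinfibre} and~\ref{lem:equidistant})---is integrated along an arbitrary geodesic to recover the global Euclidean distance formula. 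Applying \cite{HL} and \cite{Sch} directly to $X$, as you propose, does not go through without first manufacturing the missing transverse affine function, and that manufacture is exactly the content of the paper's blow-up step.
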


Using Corollary \ref{cor:samebusemannfunction} we can assume that $\partial X = c(\R) \cup c'(\R)$,
where $c,c':\R\to X$ are parallel lines with the same Busemann functions
$b^{\pm}$. In particular
$b^+(c(t))=b^+(c'(t))=-t$ and $b^-(c(t))=b^-(c'(t))=t$.
Let $a:=|c(0)c'(0)|$ and for
$t\in \R$ let $h_t:[0,a]\to X$ the geodesic from
$c(t)$ to $c'(t)$. We emphazise here, that $h_0$ is parametrized by arclength,
but we do not know, if $h_t$ has unit speed
for $t\ne 0$.
We also define $c_0:=c$ and $c_a:=c'$.
Define $h:\R\times [0,a] \to X$ by
$h(t,s)= h_t(s)$. 
With $H_t$ we denote the set $h_t([0,a])$.
By Corollary \ref{cor:parallelaffine2} the Busemann functions
$b^{\pm}$ are affine on the image of $h$ and thus
$b^+(h(t,s))=-t$ and $b^-(h(t,s))=t$ on $H_t$.
   
We claim that $h$ is a homeomorphism: Clearly $h$ is continuous by Corollary \ref{cor:contdepinitialpoints}.
To show injectivity we note first that $H_t \cap H_{t'} =\emptyset$ for
$t\neq t'$ since $b^+$ has different values on the sets and secondly that for fixed
$t$ the map $h_t$ is clearly injective.
Since $c_0,c_a$ are parallel, i.e. the length of $h_t$ is sublinear, we easily see that
$h$ is a proper map. Since $\partial X$ is in the image of
$h$ and $\R\times (0,a)$, $X\sm \partial X$ are 2-dimensional connected manifolds and $h$ is injective
and proper, we see that $h$ is a homeomorphism.

\begin{lemma} \label{lem:parallelline}
 For all $x\in X$ there exists a unique line
$c_x:\R \to X$ with $c_x$ parallel to $c_0$ and $c_a$
and $c_x(0)=x$.
\end{lemma}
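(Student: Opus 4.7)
The plan is to build the line $c_x$ as a concatenation of two unit speed asymptotic rays from $x$---one to $c_0^+$ and one to $c_0^-$---and then to verify via the affine Busemann function $b^+$ (with $b^-=-b^+$ on $X$, established in the paragraph preceding the lemma) that the concatenation is a unit speed geodesic line parallel to $c_0$ and $c_a$. Uniqueness will reduce immediately to Corollary \ref{cor:uniqueasymptray}.

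For existence, write $b^+(x)=-t_0$ and, for each $n>t_0$, let $\ga_n\colon[0,L_n]\to X$ be the unit speed geodesic from $x$ to $c_0(n)$, with $L_n=d(x,c_0(n))$. The $1$-Lipschitz property of $b^+$ combined with the triangle inequality yields $n-t_0\le L_n\le d(x,c_0(t_0))+(n-t_0)$, so $L_n=n-t_0+O(1)$. The space $X$ is proper, since the $1$-Lipschitz bound on $b^+$ confines closed balls $B(x,R)$ inside the compact set $h([t_0-R,t_0+R]\times[0,a])$; Arzel\`a--Ascoli therefore yields a subsequential limit $r^+\colon[0,\infty)\to X$ with $r^+(0)=x$. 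The crucial point is that, because $b^+$ is affine on $X$, the composition $b^+\circ\ga_n$ is an affine function on $[0,L_n]$ going from $-t_0$ to $-n$; explicitly $b^+(\ga_n(s))=-t_0-\frac{n-t_0}{L_n}\,s$. Since $(n-t_0)/L_n\to 1$, passing to the limit we obtain $b^+(r^+(s))=-t_0-s$ for every $s\ge 0$, i.e.\ $r^+(s)\in H_{t_0+s}$. Parallelism of $c_0,c_a$ forces $\diam H_{t_0+s}=d(c_0(t_0+s),c_a(t_0+s))=o(s)$, hence $d(r^+(s),c_0(t_0+s))=o(s)$ and $r^+$ is asymptotic to $c_0^+$. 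By Corollary \ref{cor:uniqueasymptray} the ray $r^+$ is unique, and the backward ray $r^-$ is constructed analogously.

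Now define $c_x\colon\R\to X$ by $c_x(s)=r^+(s)$ for $s\ge 0$ and $c_x(s)=r^-(-s)$ for $s\le 0$. Using $b^-=-b^+$ on $X$, one obtains $b^+(c_x(s))=-t_0-s$ for every $s\in\R$. For $s\le 0\le t$, the $1$-Lipschitz bound on $b^+$ gives $|c_x(t)c_x(s)|\ge |b^+(c_x(t))-b^+(c_x(s))|=t-s$, while the triangle inequality gives $|c_x(t)c_x(s)|\le t+|s|=t-s$; hence $c_x$ is a unit speed geodesic line. Parallelism to $c_0$ (and thus to $c_a$, by the triangle inequality together with parallelism of $c_0,c_a$) follows from $d(c_x(s),c_0(s+t_0))=o(s)$.

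For uniqueness, any line $\tilde c_x$ through $x$ parallel to $c_0$ decomposes into forward and backward rays from $x$ at sublinear distance from $c_0^\pm$, hence asymptotic to them; by Corollary \ref{cor:uniqueasymptray} these half-rays coincide with $r^\pm$, so $\tilde c_x=c_x$. The substance of the argument lives in the existence step, and the genuine input there is the affinity of $b^+$ on $X$: without it one could not conclude that $b^+(r^+(s))=-t_0-s$ holds pointwise in the limit, and the sublinear diameter estimate on the level sets $H_t$ would not translate into the asymptotic property of $r^+$ with respect to $c_0^+$.
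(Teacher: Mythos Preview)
Your proof is correct and follows essentially the same strategy as the paper: construct forward and backward rays from $x$ as subsequential limits of geodesics to $c_0(\pm n)$, verify via the Busemann function $b^+$ that the concatenation is a line, and invoke Corollary~\ref{cor:uniqueasymptray} for uniqueness. The only minor difference is that you use the affinity of $b^+$ on $X$ (established just before the lemma) directly to get $b^+(r^+(s))=-t_0-s$ and hence $r^+(s)\in H_{t_0+s}$, whereas the paper first argues topologically that the limit ray crosses each $H_t$ and then deduces slope $-1$; your route is slightly cleaner but not a different idea.
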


\begin{proof}
 The Uniqueness follows from Corollary \ref{cor:uniqueasymptray}.
To show the existence let $x\in H_{t_0}$. Consider for $i$ large enough the unit speed geodesics $c_i^+:[0,d_i]\to X$
from $x$ to $c_0(i)$, where $d_i=|xc_0(i)|$. By local compactness a subsequence will converge to a limit ray
$c_x^+:[0,\infty) \to X$ with $c_x^+(0)=x$. For topological reasons $c_x^+$ intersects $H_t$ for
$t\ge t_0$. Let $c_x^+(\varphi(t))\in H_t$, then the sublinearity of the length of $H_t$ implies that
$\varphi(t)/t \to 1$ and that $c_x^+$ is asymptotic to $c_0$.
Furthermore the convex function $b^+$ has slope $-1$ on $c_x^+$, i.e.
$b^+(c_x^+(t)) = -t_0 -t$.

In a similar way we obtain a ray $c_x^-:[0,\infty) \to X$ with
$c_x^-(0)=x$, $c_x^-$ asymptotic to $c_0^-$ with
$b^+(c_x^-(t))=-t_0+t$.
Now define $c_x:\R\to X$ by $c_x(t)=c_x^+(t)$ for $t\ge 0$ and
$c_x(t)=c_x^-(-t)$ for $t\leq 0$.
Then $c_x$ is a line since
$$2t \geq |c_x(t)c_x(-t)|\geq |b^+(c_x(t))-b^+(c_x(-t))| = 2t,$$
and hence $|c_x(t)c_x(-t)| =2t$.
\end{proof}

For 
$s\in [0,a]$ let
$c_s:= c_{h(0,s)}$ be the parallel line through $h(0,s)$.
Consider
$c:\R\times [0,a] \to X$, $c(t,s)=c_s(t)$.
This is another parametrization of $X$. Note that $b^+(c_s(t))=-t$.

\begin{remark}
 We do not know in the moment, if
$c(t,s)=h(t,s)$, our final result will imply that.
\end{remark}

\noindent Since we have the foliation of $X$ by the lines $c_s$, we have the property:

(A): If $t,t'\in \R$, $x\in H_t$, then there exist $x'\in H_{t'}$ with $|xx'|=|t-t'|$.

\noindent For $0\leq s\leq a$ we define the {\em fibre distance}
$A_s:X\to \R$ in the following way.
Let $x\in X$, $x=c_{s'}(t')$, i.e. $x\in H_{t'}$.
Then $A_s(x) = \pm |xc_{s}(t')|$, where the sign equals to the sign of
$(s'-s)$. 
Thus $A_s(x)$ is the distance in the fibre
$H_{t'}$ from the point $x$ to the intersection point 
$c_s(\R)\cap H_{t'}$.
Note that by easy triangle inequality arguments $A_s$ is a 2-Lipschitz function.

\input{coordinate.TpX}

We also define for $t\in \R$ the function
$B_t:X\to \R$ by
$B_t(x) =(t'-t)$, when $x\in H_{t'}$.
Note that $B_t$ is 1-Lipschitz and affine, since $b^+$ is
1-Lipschitz and affine.

For fixed $x_0=c_{s_0}(t_0)\in X\sm \partial X$ consider the map
$F_{x_o}:X\to \R^2$ defined by

$$F_{x_0}(x)=(B_{t_0}(x),A_{s_0}(x)).$$

\begin{lemma} \label{lem:bilip}
 $F_{x_0}$ is a bilipschitz map, where $\R^2$ carries the standard euclidean metric
$\deu$, more precisely for all $x,y \in X$ we have
$$\frac{1}{4}|xy|\le \deu(F_{x_0}(x),F_{x_0}(y)) \le 2|xy|. $$
\end{lemma}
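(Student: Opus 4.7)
The plan is to prove the two coordinate functions $B_{t_0}$ and $A_{s_0}$ are Lipschitz of modulus $1$ and $2$ respectively; the upper bound then follows by combining, and the lower bound is a short triangle-inequality argument that uses $A_{s_0}$'s Lipschitz constant as an input.

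Since $B_{t_0}(z)=-b^+(z)-t_0$ and $b^+$ is $1$-Lipschitz, $|B_{t_0}(x)-B_{t_0}(y)|\le |xy|$ for free. For $A_{s_0}$, set $p_x:=c_{s_0}(t_x)$ and $p_y:=c_{s_0}(t_y)$; these are the unique intersections of $c_{s_0}(\R)$ with $H_{t_x}$ and $H_{t_y}$, and $|p_xp_y|=|t_x-t_y|=:|\Delta B|\le |xy|$. When $A_{s_0}(x)$ and $A_{s_0}(y)$ share a sign, two triangle-inequality steps give
\[
|A_{s_0}(x)-A_{s_0}(y)|=\bigl||xp_x|-|yp_y|\bigr|\le |xy|+|\Delta B|\le 2|xy|.
\]
When the signs differ, $x$ and $y$ lie in different components of $X\sm c_{s_0}(\R)$ and the geodesic joining them crosses $c_{s_0}(\R)$ at a single point $q=c_{s_0}(\tau)$. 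Here the key global input enters: Lemma~\ref{lem:parallelline} places every point of $X$ on a line parallel to $c$, so by Corollary~\ref{cor:parallelaffine2} $b^+$ is affine on all of $X$, in particular on the geodesic $xy$. Hence $-\tau=b^+(q)$ is a convex combination of $-t_x$ and $-t_y$, placing $\tau$ inside $[\min(t_x,t_y),\max(t_x,t_y)]$, and
\[
|A_{s_0}(x)|+|A_{s_0}(y)|\le (|xq|+|\tau-t_x|)+(|qy|+|\tau-t_y|)=|xy|+|\Delta B|\le 2|xy|.
\]
In either case $|A_{s_0}(x)-A_{s_0}(y)|\le 2|xy|$, and combining with the $B$-estimate yields the upper bound.

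For the lower bound, put $q:=c_{s_y}(t_x)$. Then $b^+(q)=-t_x$ gives $q\in H_{t_x}$, and $|qy|=|c_{s_y}(t_x)\,c_{s_y}(t_y)|=|\Delta B|$ because $c_{s_y}$ is unit-speed. Both $x,q\in H_{t_x}$ and $H_{t_x}$ is a geodesic, so $|xq|=|A_{s_0}(x)-A_{s_0}(q)|$; the triangle inequality in the $A_{s_0}$ coordinate together with the $2$-Lipschitzness just proved gives
\[
|xq|\le |A_{s_0}(x)-A_{s_0}(y)|+|A_{s_0}(y)-A_{s_0}(q)|\le |\Delta A|+2|qy|=|\Delta A|+2|\Delta B|.
\]
Therefore $|xy|\le |xq|+|qy|\le |\Delta A|+3|\Delta B|\le 4\max(|\Delta A|,|\Delta B|)\le 4\,\deu(F_{x_0}(x),F_{x_0}(y))$.

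The delicate point is the opposite-sign case of the $A_{s_0}$ estimate: a bare triangle inequality only yields a bound involving $|\tau-t_x|+|\tau-t_y|$, which can exceed $|\Delta B|$. Localising $\tau$ inside $[\min(t_x,t_y),\max(t_x,t_y)]$ uses precisely that $X$ is foliated by parallels to $c$, via the global affineness of $b^+$ furnished by Lemma~\ref{lem:parallelline} and Corollary~\ref{cor:parallelaffine2}.
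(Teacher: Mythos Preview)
Your proof is correct and follows essentially the same approach as the paper. The paper asserts the $2$-Lipschitzness of $A_{s_0}$ without proof just before the lemma (``by easy triangle inequality arguments''), whereas you supply the details, including the opposite-sign case via the affineness of $b^+$; for the lower bound, both arguments move one point along its parallel line into the other point's fibre (Property~(A)) and exploit that $A_{s_0}$ is an isometry along a fibre, the only difference being that the paper presents this as a case split on whether $|\Delta B|\le\tfrac14|xy|$ while you package it as the single chain $|xy|\le|\Delta A|+3|\Delta B|$.
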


\begin{proof}
 Since $B_{t_0}$ is 1-Lipschitz and $A_{s_0}$ is 2-Lipschitz, also
$F_{x_0}$ is 2-Lipschitz.
Now assume
$x\in H_t$, $y\in H_{t'}$.
We claim that
$|F_{x_0}(x)-F_{x_0}(y)|\ge \frac{1}{4}|xy|$. To prove this claim, we can assume that
$|B_{t_0}(x)-B_{t_0}(y)| \leq \frac{1}{4}|xy|$.
By Property (A) there exists $x'\in H_{t'}$ with
$|xx'|=|t-t'|\le \frac{1}{4}|xy|$. Thus $|x'y|\geq \frac{3}{4}|xy|$ and
hence
$$|A_{s_0}(y)-A_{s_0}(x)|\ge |A_{s_o}(y)-A_{s_0}(x')|-|A_{s_0}(x')-A_{s_0}(x)|.$$
Note that 
$$|A_{s_o}(y)-A_{s_0}(x')| =|yx'|$$
 and
$$|A_{s_0}(x')-A_{s_0}(x)|\le 2|x'x|,$$
since $A_{s_0}$ is 2-Lipschitz.
Thus
$$|A_{s_0}(y)-A_{s_0}(x)| \ge |yx'|-2|x'x|\ge \frac{1}{4}|xy|.$$
\end{proof}

 For $\la >0$ we define
$F_{x_0}^{\la}:X\to \R^2$ by
$F_{x_0}^{\la}(x) =\la F_{x_0}(x)$.
Then $F_{x_0}^{\la}:(X,\la d) \to (\R^2,\deu)$ is also a bilischitz with the same constants $\frac{1}{4}$ and $2$
for all
$\la >0$.
Now consider a sequence
$\la_i \to \infty$ and let $d_{\la_i}$ be the metric on
$W_{\la_i}=\la_i\cdot(F_{x_0}(X)) \subset \R^2$ such that
$F_{x_0}^{\la_i}:(X,\la_i d) \to (W_i,d_{\la_i})$ is an isometry. By the above we have
$\frac{1}{2}\deu \le d_{\la_i} \le 4\deu$.

\begin{proposition} \label{prop:diconvergesdeu}
If $\la_i \to \infty$ then
 $d_{\la_i}$ converges uniformly on compact subsets to the standard euclidean distance
$\deu$.
\end{proposition}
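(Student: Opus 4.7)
The plan is to extract a subsequential limit metric $d_\infty$ on $\R^2$ via Arzelà--Ascoli and then show, using the Hitzelberger--Lytchak embedding, Schoenberg's theorem, and the rigidity coming from the coordinate functions $B_{t_0}$ and $A_{s_0}$, that $d_\infty=\deu$. Since this would hold for every subsequential limit, the full sequence $d_{\lambda_i}$ converges to $\deu$ uniformly on compacts.

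First, from the uniform bilipschitz bound $\tfrac{1}{2}\deu\le d_{\lambda_i}\le 4\deu$ the family $\{d_{\lambda_i}\}$ is equicontinuous and locally uniformly bounded on $\R^2\times\R^2$. Because $x_0$ lies in $X\sm\partial X$ and $F_{x_0}$ is an open map at $x_0$ by invariance of domain, the sets $W_{\lambda_i}=\lambda_i F_{x_0}(X)$ exhaust $\R^2$. Arzelà--Ascoli then produces a subsequential limit $d_\infty$ on $\R^2$ with the same bilipschitz bounds, and the standard GH-type arguments (compactness of uniformly Lipschitz reparametrized geodesics; closedness of the PT inequality under pointwise limits) show that $(\R^2,d_\infty)$ is again a geodesic PT space. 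I would also identify two distinguished unit-speed $d_\infty$-geodesic lines: the $p_1$-axis is the limit of $F_{x_0}^{\lambda_i}\circ c_{s_0}$, which is unit-speed in $(W_{\lambda_i},d_{\lambda_i})$ because $A_{s_0}\circ c_{s_0}\equiv 0$, while the $p_2$-axis is the limit of the diverging segments $F_{x_0}^{\lambda_i}(H_{t_0})$, since $A_{s_0}|_{H_{t_0}}$ is an arclength isometry. Moreover the functional $p\mapsto p_1$ inherits from $B_{t_0}$ the properties of being affine on $d_\infty$-geodesics and of being $1$-Lipschitz.

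Next, by Lemma~\ref{lem-distance-convex} the limit $(\R^2,d_\infty)$ is distance-convex, so the Hitzelberger--Lytchak theorem yields an isometric embedding $\Phi:(\R^2,d_\infty)\to V$ into a Banach space $V$ with geodesics mapping to line segments. The image $\Phi(\R^2)$ is convex of topological dimension $2$, hence lies in a $2$-dimensional affine subspace $P\sub V$ and has nonempty interior there. The PT inequality holds on that interior, and both sides are translation- and positive-scale-invariant in the normed plane $P$, so PT propagates to all of $P$; Schoenberg's theorem then forces the norm on $P$ to come from an inner product. Since the $\Phi$-images of the two coordinate axes are two distinct affine lines in $P$ through a common point, their convex hull is all of $P$, so $\Phi(\R^2)=P$ and $(\R^2,d_\infty)$ is globally isometric to a $2$-dimensional Euclidean plane.

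The hard part will be identifying this Euclidean structure with the standard one on $\R^2$. Mazur--Ulam makes $\Phi$ affine, and after a translation in $V$ I may assume $\Phi(0)=0$. The two coordinate axes being unit-speed $d_\infty$-geodesics gives $\|\Phi(e_1)\|_P=\|\Phi(e_2)\|_P=1$. Pulling back $p\mapsto p_1$ via $\Phi^{-1}$ produces a linear $1$-Lipschitz functional on $P$ of norm exactly $1$, which must therefore equal $\langle\,\cdot\,,\Phi(e_1)\rangle_P$; as it vanishes on $\Phi(\R e_2)$, the Cauchy--Schwarz equality case forces $\Phi(e_1)\perp\Phi(e_2)$ in $P$. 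Hence $\{\Phi(e_1),\Phi(e_2)\}$ is an orthonormal basis of $P$, so the pulled-back inner product makes the standard basis of $\R^2$ orthonormal and $d_\infty=\deu$.
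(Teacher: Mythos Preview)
There is a genuine gap at the point where you invoke the Hitzelberger--Lytchak theorem. You write that $(\R^2,d_\infty)$ is distance-convex and that ``the Hitzelberger--Lytchak theorem yields an isometric embedding $\Phi:(\R^2,d_\infty)\to V$''. But the hypothesis of \cite{HL} is not distance convexity: their theorem says that a geodesic space is isometric to a convex subset of a normed space if and only if the \emph{affine} functions separate points. Distance convexity is much weaker; e.g.\ $\Hy^2$ is distance-convex but carries no nonconstant affine function. At this stage you have established only one nontrivial affine function on $(\R^2,d_\infty)$, namely $p\mapsto p_1$ (inherited from $B_{t_0}$). That is not enough to separate points, so the embedding into a normed plane is unjustified.

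The paper fills exactly this gap by producing a second affine function. Using property~(A) (which survives in the limit), one sees that for every $t\in\R$ and every $x=(t,s)$ there is a point $(t',s')$ at $d_\infty$-distance $|t-t'|$; this forces all the vertical lines $s\mapsto(t,s)$ to be mutually parallel in $(\R^2,d_\infty)$. Then Corollary~\ref{cor:parallelaffine} makes the Busemann function $b_2$ of the $p_2$-axis affine, and one computes $b_2(t,s)=\alpha t-s$ for some $\alpha$. Now $b_1(t,s)=-t$ and $b_2$ together separate points, \cite{HL} applies, Schoenberg gives an inner product, and the orthogonality argument (equivalent to your functional argument) forces $\alpha=0$ and $d_\infty=\deu$. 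Your final identification step via Mazur--Ulam and the norm-one functional is perfectly fine and is essentially the paper's orthogonality argument in different language; the missing ingredient is the parallelism of the vertical lines, without which you cannot get into a normed plane in the first place.
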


\begin{proof}
Since $x_0$ is an inner point of $X$,
$W_{\la_1} \subset W_{\la_2}\subset \cdots$ and $\bigcup W_{\la_i} =\R^2$.
Since $\frac{1}{2}\deu \le d_{\la_i}\le 4\deu$
any subsequence of the integers has itself a subsequence
$i_j\to \infty$ with
$d_{\la_{i_j}}\to d_{\om}$ for some accumulation metric $d_{\om}$ on $\R^2$.
We show that $d_{\om}= \deu$ is always the the euclidean distance and hence
$d_{\la_i}$ will converge to $\deu$.

To prove this we collect some properties of the accumulation metric $d_{\om}$:

(a)  $(\R^2,d_{\om})$ is a geodesic PT space.

(b) By construction $F_{x_0}^{\la}$ maps the geodesic $c_{s_0}$ to the
line $t\mapsto (t,0)$ in $\R^2$ and the geodesic segment $H_t$ to a part of the line
$s\mapsto (\la (t-t_0),s)$.
Therefore $t\mapsto (t,0)$ is a geodesic parametrized by arclength 
in the metric $(\R^2,d_{\om})$ 
and $s\mapsto (t,s)$ is a geodesic parametrized by arclength for all $s$.
Each of these vertical geodesics $s\mapsto (t,s)$ is contained in a level set of
the Busemann function $b_1$ of the line $t\mapsto (t,0)$.
Thus $b_1(t,s)=-t$ and $b_1$
is affine as a limit of affine functions.

(c) The property (A) implies in the limit that for 
$x=(t,s)$ and $t'\in \R$ there exists $y=(t',s')$ with
$|xy|=|t-t'|$.
In particular the lines $s\mapsto (t,s)$ are all parallel.
Thus if $b_2$ is the Busemann function of $s\mapsto (0,s)$,
then this function is affine by Corollary \ref{cor:parallelaffine}.
Note that $b_2(0,s)=-s$ and
$b_2(t,s) = b_2(t,0) -s$.
Since $b_2$ is affine and $t\mapsto (t,0)$ is a geoodesic, we have
$b_2(t,0) = \al t$ for some $\al \in \R$ and hence
$b_2(t,s)=\al t -s$. 

Thus the two affine functions
$b_1$ and $b_2$ separate the points in $(\R^2,d_{\om})$.
It follows by the result of Hitzelsberger-Lytchak \cite{HL}, that
$(\R^2,d_{\om})$ is isometric to a normed vector space. It follows
then from the theorem of Schoenberg \cite{Sch}, that
$(\R^2,d_{\om})$ is isometric to an inner product space.
We claim that the constant $\al$ equals $0$: Since the line
$s\mapsto (0,s)$ lies in the level of the Busemannn function of the
line $t\mapsto (t,0)$ and the space is an inner product space, the two lines
are orthogonal, i.e. $\al =0$.
It now follows easily that $d_{\om} =\deu$.
\end{proof}

Consider now a unit speed geodesic
$\ga:[0,d]\to X$ with 
$\ga(0)=c_{s_0}(t_0) \in X\sm \partial X$.
Since $B_{t_0}$ is affine, we have
$B_{t_0}(\ga(r))=\al r$ for some $\al \in \R$.

\begin{corollary} \label{cor:pythformula}
 With this notation we have
$$\lim_{r\to 0} \frac{A_{s_0}^2(\ga(r))}{r^2} = 1-\al^2.$$
\end{corollary}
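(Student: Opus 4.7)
The plan is to rescale the metric on $X$ by $\lambda = 1/r$ and use the convergence result of Proposition \ref{prop:diconvergesdeu}, which will reduce the claim to the Pythagorean theorem in the Euclidean plane.

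First, I compute $F_{x_0}(\gamma(r))$ explicitly. Since $\gamma(0)=x_0 = c_{s_0}(t_0)$, we have $F_{x_0}(x_0)=(0,0)$, and by the assumption $B_{t_0}(\gamma(r))=\al r$ we get $F_{x_0}(\gamma(r))=(\al r, A_{s_0}(\gamma(r)))$. Rescaling by $\la = 1/r$ gives
$$F_{x_0}^{1/r}(\gamma(r)) = \bigl(\al,\, A_{s_0}(\gamma(r))/r\bigr).$$
Since $\gamma$ is unit speed in $d$, it has length $1$ in the rescaled metric $(1/r) d$ on the segment $[0,r]$, and since $F_{x_0}^{1/r}$ is an isometry from $(X,(1/r)d)$ onto $(W_{1/r},d_{1/r})$,
$$d_{1/r}\bigl((0,0),\,(\al, A_{s_0}(\gamma(r))/r)\bigr) = 1.$$

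Next, observe that the second coordinate stays in a bounded set: because $A_{s_0}$ is $2$-Lipschitz and $A_{s_0}(x_0)=0$, one has $|A_{s_0}(\gamma(r))|\le 2r$, hence $|A_{s_0}(\gamma(r))/r|\le 2$. Thus all the points $(\al, A_{s_0}(\gamma(r))/r)$ lie in the compact set $\{\al\}\times[-2,2]\sub \R^2$. Taking any sequence $r_i\to 0$ (so $\la_i=1/r_i\to\infty$) and extracting a further subsequence along which $A_{s_0}(\gamma(r_i))/r_i \to \be$, the uniform convergence $d_{\la_i}\to \deu$ on compact subsets from Proposition \ref{prop:diconvergesdeu} yields
$$1 = \lim_{i\to\infty} d_{\la_i}\bigl((0,0),\,(\al, A_{s_0}(\gamma(r_i))/r_i)\bigr) = \deu\bigl((0,0),(\al,\be)\bigr) = \sqrt{\al^2+\be^2}.$$
Hence $\be^2 = 1-\al^2$. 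Since every subsequential limit of the bounded sequence $A_{s_0}(\gamma(r))/r$ has the same square $1-\al^2$, the limit $\lim_{r\to 0} A_{s_0}^2(\gamma(r))/r^2 = 1-\al^2$ exists.

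The only delicate point is making sure the Pythagorean identity may really be extracted from the rescaling: this requires both the isometry property of $F_{x_0}^{\la}$ and the uniform convergence of $d_{\la_i}$ to $\deu$ on a fixed compact set, which is where Proposition \ref{prop:diconvergesdeu} (and through it the Hitzelberger--Lytchak and Schoenberg theorems) does all the heavy lifting. Everything else is bookkeeping.
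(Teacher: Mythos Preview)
Your proof is correct and follows essentially the same route as the paper: compute $F_{x_0}^{1/r}(\gamma(r))=(\al,A_{s_0}(\gamma(r))/r)$, use that its $d_{1/r}$-distance to the origin equals $1$, and invoke Proposition~\ref{prop:diconvergesdeu} to pass to the Euclidean distance and read off the Pythagorean relation. Your version is in fact a bit more careful than the paper's, since you make explicit the boundedness $|A_{s_0}(\gamma(r))/r|\le 2$ needed to apply the uniform convergence on compact sets, and you handle the limit via subsequences rather than writing the convergence in one line.
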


\input{limitequality.TpX}

\begin{proof}
Note that $F_{x_0}(\ga(r))=(\al r, A_{s_0}(\ga(r)))$.
By Propossition \ref{prop:diconvergesdeu}
$$\deu(0,F_{x_0}^{1/r}(\ga(r)))\to \frac{1}{r}|x_0\ga(r)|=1.$$
Now 
$$\deu^2(0,F_{x_0}^{1/r}(\ga(r))) = \al^2 + \frac{A_{s_0}^2(\ga(r))}{r^2}.$$
 
\end{proof}

Let now $\si = c_s(\R)$ one of the parallel lines with $0\le s \le a$ considered
as closed convex subset of $X$
We then have the projection
$\pi_{\si}:X\to \si$ from Corollary \ref{cor:projection}.
We show that the projection stays in the same fibre.

\begin{lemma} \label{lem:projinfibre}
$ b^+(\pi_{\si}(x)) = b^+(x)$
\end{lemma}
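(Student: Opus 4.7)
The plan is to argue by contradiction. Assuming $y := \pi_\sigma(x)$ satisfies $b^+(y)\ne b^+(x)$, set $t_x := -b^+(x)$, $t_y := -b^+(y)$, and without loss of generality $\delta := t_y - t_x > 0$. Let $z := c_s(t_x)$ be the point of $\sigma$ at the $b^+$-level of $x$, and put $D := |xz|$, $d_1 := |xy|$. By the definition of projection $d_1 \le D$. The goal will be to prove $d_1 = D$, since by the strict convexity of $t\mapsto |x\,c_s(t)|$ (Theorem \ref{thm:sdc}) the unique minimizer $y$ would then coincide with $z$, forcing $\delta = 0$ and giving the contradiction.

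First I would establish a parallel-translation identity. Let $c_x$ be the parallel line through $x$ (Lemma \ref{lem:parallelline}); since $c_x$ and $c_s$ are both parallel to $c_0$, they are parallel to each other. For any shift $h\in\R$ the function $r\mapsto |c_x(r),c_s(r+h)|$ is convex on $\R$ by Lemma \ref{lem-distance-convex} and bounded (combining the sublinearity of the parallel-line distance with the triangle inequality along $c_s$); a bounded convex function on $\R$ is constant, so this distance depends only on $h$. Taking $h = t_y$ yields $|c_x(r),c_s(t_y+r)| = |x,y| = d_1$, and $h = t_x$ yields $|c_x(r),c_s(t_x+r)| = |x,z| = D$, for all $r$. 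Combining these identities with the strict convexity of $t\mapsto|c_x(r),c_s(t)|$ and uniqueness of its minimum, one deduces $\pi_\sigma(c_x(r)) = c_s(t_y+r)$ and $d(c_x(r),\sigma) = d_1$ for every $r\in\R$. In particular the point $x' := c_x(\delta)$ lies at the $b^+$-level of $y$ and satisfies $|x'y| = D$.

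The next step applies Ptolemy's inequality to the four points $\{x,x',y,z\}$, which form a parallelogram-type configuration with $|xx'| = |yz| = \delta$ and $|xz| = |x'y| = D$. The three Ptolemy pairings yield
\[
D^2 - \delta^2 \;\le\; d_1\cdot |x'z| \;\le\; D^2 + \delta^2.
\]
Combined with the triangle bound $|x'z| \le D + \delta$ this gives $d_1 \ge D - \delta$; equality would saturate the triangle inequality and place $\{x,x',y,z\}$ on a common geodesic, which is impossible because $b^+$ is affine along geodesics yet the four points realise the values $-t_x,-t_y,-t_x,-t_y$ in a non-monotone pattern. Hence $d_1 > D - \delta$ strictly.

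The main obstacle I expect is closing the remaining gap between $d_1 > D - \delta$ and the required equality $d_1 = D$. The plan for this final step is to exploit the coherent shift $\pi_\sigma(c_x(k\delta)) = c_s(t_y+k\delta)$ (valid for every $k\in\Z$) to generate an infinite family of congruent parallelograms, apply Ptolemy's inequality to suitable quadruples drawn from this family, and combine with the asymptotic Busemann estimates $|c_x(k\delta),c_s(t_y+k'\delta)| = |k-k'|\delta + O(1)$ as $|k-k'|\to\infty$ to progressively sharpen the bound on $d_1$ until $d_1 = D$ is forced, completing the contradiction.
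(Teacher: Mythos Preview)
There is a genuine gap at the very first step. You assert that $r\mapsto |c_x(r)\,c_s(r+h)|$ is convex ``by Lemma~\ref{lem-distance-convex}'', but that lemma only gives convexity of $d_p=|\cdot\,p|$ for a \emph{fixed} point $p$; it says nothing about the distance between two simultaneously moving points along geodesics. What you are invoking is Busemann convexity, and this is precisely what is \emph{not} known for geodesic PT spaces: by \cite{FLS} a geodesic PT space that is Busemann convex is already $\CAT(0)$, which is the open Question~({\bf Q}) discussed in Section~\ref{sec:4pt}. Without convexity the implication ``bounded $\Rightarrow$ constant'' fails, so your parallel-translation identity---and with it the parallelogram configuration, the Ptolemy estimates, and the entire closing strategy---is unsupported. (Even the boundedness is unclear at this stage: parallelism only gives sublinear growth of $|c_x(r)\,c_s(r)|$; equidistance is Lemma~\ref{lem:equidistant}, which in the paper is proved \emph{after} the present lemma and uses it as input.) Finally, your last paragraph is a plan rather than an argument, so even granting the translation identity the proof would remain incomplete.

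The paper's approach is entirely different and avoids any appeal to Busemann-type convexity. It considers the unit-speed geodesic $\gamma$ from $x_0=\pi_\sigma(x)$ to $x$, writes $b^+(\gamma(r))=\alpha r - t_0$ (using that $b^+$ is affine), and invokes the infinitesimal Pythagorean relation of Corollary~\ref{cor:pythformula}, namely $\lim_{r\to 0} A_s(\gamma(r))^2/r^2 = 1-\alpha^2$. If $\alpha\neq 0$ this forces $|A_s(\gamma(r))|<r=D(\gamma(r))$ for small $r>0$, contradicting the obvious inequality $D\le |A_s|$. The heavy lifting is thus done earlier, in Proposition~\ref{prop:diconvergesdeu}, where the blow-up metrics are identified with the Euclidean metric via Hitzelberger--Lytchak and Schoenberg; once that is in hand, the present lemma is a short local argument at the foot point of the projection.
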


\begin{proof}
It suffices to prove the result for
$\si =c_s(\R)$, where $0<s<a$, since for $s=0,a$ it then follows by continuity.

Assume that $\pi_{\si}(x)=x_0 \in H_{t_0}$, while
$x\in H_t$. Let
$\ga:[0,d]\to X$ be the unit speed geodesic from
$x_0$ to $x$ where $d=|x_0x|$. Let
$D:X\to [0,\infty)$ be the distance to $\si$, i.e.
$D(x)=|x\pi_{\si}(x)|$. Note that $D(x) \le |A_s(x)|$ and that
 $D(\ga(r))=r$.
Since $b^+$ is affine we have
$b^+(\ga(r))= \al r -t_0$ for some $\al \in \R$.

We have to show that $\al =0$.
By Corollary \ref{cor:pythformula} 
$$\lim_{r\to 0} \frac{A_s^2(\ga(r))}{r^2} = 1-\al^2.$$
If $|\al| \neq 0$ this would imply that for $r > 0$ small enough
$|A_s(\ga(r))| < r =D(\ga(r))$, in contradiction to
$D(x) \le |A_s(x)|$.

\end{proof}

\begin{lemma} \label{lem:equidistant}
For $s_1,s_2 \in [0,1]$ the function
$t\mapsto |c_{s_1}(t)c_{s_2}(t)|$ is constant.
\end{lemma}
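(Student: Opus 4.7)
The plan is to apply the Ptolemy inequality to the quadruple $y_i := c_{s_1}(t_0+\epsilon i)$, $z_i := c_{s_2}(t_0+\epsilon i)$ for $i\in\{0,1\}$, and combine it with the projection identity of Lemma~\ref{lem:projinfibre} to show that the increment of $f(t) := |c_{s_1}(t)\,c_{s_2}(t)|$ under a shift by $\epsilon$ is $O(\epsilon^2)$; a standard partitioning argument then forces $f$ to be constant. I assume $s_1\neq s_2$, the other case being trivial.

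First I note that $f$ is continuous and everywhere strictly positive. If $c_{s_1}(t_0) = c_{s_2}(t_0)$ for some $t_0$, then shifting both parametrizations by $-t_0$ produces two lines parallel to $c_0$ passing through the same point at parameter $0$; by the uniqueness in Lemma~\ref{lem:parallelline} these two lines must coincide. But $c_{s_i}(0) = h(0,s_i)$ are distinct for $s_1\neq s_2$ because $h_0$ is an arclength parametrization of $H_0$, a contradiction.

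Next, Ptolemy applied to the quadruple $(y_0,z_1,y_1,z_0)$ gives
\[|y_0\,z_1|\cdot|y_1\,z_0| \;\le\; |y_0\,y_1|\cdot|z_1\,z_0| + |y_0\,z_0|\cdot|z_1\,y_1| \;=\; \epsilon^2 + f(t_0)\,f(t_0+\epsilon).\]
By Lemma~\ref{lem:projinfibre} applied to $\sigma = c_{s_2}(\R)$, the projection of $y_0$ is $z_0$, so $|y_0\,z_1|\ge|y_0\,z_0|=f(t_0)$; applied to $\sigma' = c_{s_1}(\R)$, the projection of $z_1$ is $y_1$, forcing $|y_0\,z_1|\ge|z_1\,y_1|=f(t_0+\epsilon)$. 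The analogous bounds hold for $|y_1\,z_0|$, so both factors on the left are at least $\max(f(t_0), f(t_0+\epsilon))$. Writing $M := \max(f(t_0), f(t_0+\epsilon))$, Ptolemy yields $M^2 \le \epsilon^2 + f(t_0)\,f(t_0+\epsilon)$, which rearranges (using $M^2 - f(t_0)f(t_0+\epsilon) = M\cdot |f(t_0+\epsilon)-f(t_0)|$) to
\[|f(t_0+\epsilon) - f(t_0)| \;\le\; \epsilon^2 \big/ \max(f(t_0),\,f(t_0+\epsilon)).\]

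On any compact interval $[T_1,T_2]\subset\R$, continuity and positivity of $f$ provide $m>0$ with $f\ge m$ on $[T_1,T_2]$, so $|f(t_0+\epsilon) - f(t_0)| \le \epsilon^2/m$ whenever $t_0, t_0+\epsilon \in [T_1,T_2]$. Partitioning $[T_1,T_2]$ into $n$ equal subintervals of length $(T_2-T_1)/n$ and summing, $|f(T_2)-f(T_1)| \le n\cdot ((T_2-T_1)/n)^2/m = (T_2-T_1)^2/(nm) \to 0$ as $n\to\infty$, so $f(T_1) = f(T_2)$. Hence $f$ is constant on $\R$, and the boundary cases $s_i\in\{0,a\}$ follow by continuity. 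The only delicate step is selecting the right Ptolemy configuration: placing the crossed pair $|y_0\,z_1|\cdot|y_1\,z_0|$ on the left is exactly what allows the two projection inequalities to push their product below $M^2$ and deliver the crucial $\epsilon^2$ estimate.
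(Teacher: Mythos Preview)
Your proof is correct and follows essentially the same route as the paper: both apply the Ptolemy inequality to the quadruple $(c_{s_1}(t),c_{s_2}(t'),c_{s_1}(t'),c_{s_2}(t))$ with the diagonals on the left, use Lemma~\ref{lem:projinfibre} to bound each diagonal below by $\max(f(t),f(t'))$, obtain $|f(t)-f(t')|\le (t-t')^2/m$, and finish with the same partitioning argument. Your added justification that $f>0$ (via the uniqueness in Lemma~\ref{lem:parallelline}) makes explicit a point the paper uses implicitly when defining $m=1/\min\mu$.
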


\begin{proof}

Let $c=c_{s_1}$ and $c'=c_{s_2}$.

We put
$\mu(t)=|c(t)c'(t)|$.
By Lemma \ref{lem:projinfibre}
$c'(t)$
is a closest to
$c(t)$
point on
$c'(\R)$,
and vice versa,
$c(t)$
is a closest to
$c'(t)$
point on
$c(\R)$
for every
$t\in\R$.
Thus
$|c(t)c'(t')|$, $|c'(t)c(t')|\ge\max\{\mu(t),\mu(t')\}$
for each
$t$, $t'\in\R$.
Applying the Ptolemy inequality to the quadruple
$(c(t),c(t'),c'(t'),c'(t))$,
we obtain 
$$\max\{\mu(t),\mu(t')\}^2\le|c(t)c'(t')||c'(t)c(t')|
  \le\mu(t)\mu(t')+(t-t')^2.$$
We show that
$\mu(a)=\mu(0)$
for every
$a\in\R$.
Assume W.L.G. that
$a>0$
and put
$m=1/\min_{0\le s\le a}\mu(s)$.
Then
$|\mu(t)-\mu(t')|\le m(t-t')^2$
for each
$0\le t,t'\le a$.
Now
$$\mu(a)-\mu(0)=\mu(s)-\mu(0)+\mu(2s)-\mu(s)+\dots+\mu(a)-\mu((k-1)s),$$
where
$s=a/k$
for 
$k\in\N$.
It follows
$|\mu(a)-\mu(0)|\le mks^2=ma^2/k\to 0$
as
$k\to\infty$.
Hence,
$\mu(a)=\mu(0)$.
 
\end{proof}

As a consequence we have
$|c(t,s)c(t,s')|=|s-s'|$ for all $t\in \R$ and of course we also have
$|c(t,s)c(t',s)|=|t-t'|$ for all $s\in \R$.
Note that Lemma \ref{lem:equidistant} also implies the formula 
$A_{s_0}(c(t,s)) = s-s_0$.

We finally want to show that
$|c(t,s)c(t',s')|=\sqrt{|t-t'|^2+|s-s'|^2}$.

\noindent We assume for simplicity $t'\ge t$ and $s'\ge s$.
Let $\ga:[0,d]\to X$ be a unit speed geodesic from
$c(t,s)$ to $c(t',s')$ with $d=|c(t,s)c(t',s')|$.
We can write $\ga(r)=c(\ga_1(r),\ga_2(r))$.
By our assumption $\ga_1$ and $\ga_2$ are nondecreasing.
Since
$\ga_1(r)=B_{t_0}(\ga(r))$ is affine we have
$$\ga_1(r)=t+\frac{t'-t}{d} r.$$
Note that by the above formula for $A_s$ we have for $r_0 ,r_1 \in [0,d]$ 
$$A_{\ga_2(r_0)}(\ga(r_1))= \ga_2(r_1)-\ga_2(r_0).$$
Therefore it follows from Corollary \ref{cor:pythformula} that
for every $r_0 \in [0,d)$ 
$$\lim_{r\to 0} \frac{(\ga_2(r_0+r)-\ga_2(r_0))^2}{r^2} = 1-\frac{(t'-t)^2}{d^2}.$$
This implies that  $\ga_2$ is differentiable with derivative
$$\ga_2'(r_0) = \sqrt{1-\frac{(t'-t)^2}{d^2}},$$
in particular the derivative is constant and therefore
also $\ga_2$ is affine and hence
$$\ga_2(r) = s+ \frac{s'-s}{d}r.$$
The formula for the derivative also implies
$$\frac{s'-s}{d} =\ga_2'(r_0)=\sqrt{1-\frac{(t'-t)^2}{d^2}}$$
which finally shows our claim
$d^2 = (t'-t)^2+(s'-s)^2.$

\section{A short proof of strict convexity} \label{sec:str_conv}

In this section we give a short proof of Theorem \ref{thm:sdc}.
\begin{theorem}
 A proper, geodesic PT metric space is strictly distance convex.
\end{theorem}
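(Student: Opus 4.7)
I would argue by contradiction. Suppose there is a point $p\in X$ and a unit-speed geodesic $c:[0,L]\to X$ satisfying the non-degeneracy condition $|c(s)c(t)|>||pc(s)|-|pc(t)||$ for $s\ne t$, but for which $d_p\circ c$ fails to be strictly convex. By Lemma \ref{lem-distance-convex} the function $d_p\circ c$ is convex, and a convex real function that is not strictly convex is affine on some subinterval. Restricting $c$ to such a subinterval, we may assume $|pc(t)|=a+\si t$ on $[0,L]$, where $a=|pc(0)|$ and $\si=(|pc(L)|-a)/L$; the non-degeneracy condition translates exactly to $|\si|<1$.

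The key preliminary observation is purely algebraic: inserting the affine formula $|pc(t)|=a+\si t$ into the Ptolemy inequality for the quadruple $(p,c(0),c(t),c(L))$ makes it a literal equality for every $t\in(0,L)$. This is the PT analogue of the Euclidean rigidity statement that equality in Ptolemy for four points with three of them collinear forces the fourth point onto the same line. The plan is to upgrade this formal equality, using properness, to the geometric conclusion that $p$ must lie on the geodesic line extending $c$, which directly contradicts $|\si|<1$.

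Concretely, I would introduce a midpoint $q$ of $p$ and $c(0)$ (existing because $X$ is geodesic, with $|pq|=|qc(0)|=a/2$) and exploit all three Ptolemy inequalities for the quadruple $(p,c(0),c(t),q)$, together with the triangle inequality $|qc(t)|\ge t-a/2$ and the convexity of $d_q\circ c$. If a single-level midpoint comparison does not suffice, I would iterate the construction — replacing $q$ by a midpoint of $p$ and $q$, and so on — and appeal to the properness of $X$ to extract an accumulation point of these auxiliary midpoints, whose existence together with the accumulated Ptolemy equalities should pin down the Euclidean rigidity and yield the contradiction.

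The main obstacle is exactly this last step. A one-shot Ptolemy/triangle comparison seems only to reproduce the tautological inequality $L\le a+b$, so the genuine contradiction must come from a compactness argument built on iterated midpoints. Properness is indispensable here, in line with the non-proper examples of \cite{FLS} for which strict distance convexity truly fails; the role of properness is precisely to turn the formal Ptolemaic equality into the honest geometric collapse that forces $p$ onto the line of $c$.
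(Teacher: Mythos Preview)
Your setup is fine: reducing to an affine segment $|pc(t)|=a+\si t$ with $|\si|<1$ is the right normalization, and your observation that the Ptolemy inequality for $(p,c(0),c(t),c(L))$ becomes an equality is correct. But that equality is a red herring---it is literally the statement that $d_p\circ c$ is affine, which you already assumed, so it carries no new information. The real gap is your auxiliary construction. Taking a midpoint $q=m(p,c(0))$ and then iterating $q\mapsto m(p,q)$ produces a sequence at distances $a/2^k$ from $p$; its only accumulation point is $p$ itself, so properness gives you nothing you did not already have. You yourself note that a single step only returns a tautology, and there is no mechanism by which repeating a tautological step on one side of the configuration generates a contradiction.

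The paper's construction is two-sided and genuinely different. One moves simultaneously along the geodesics $px$ and $py$ (with $x=c(0)$, $y=c(L)$) to points $x_t,y_t$ at distance $t$ from $x,y$, draws the secant geodesic $x_ty_t$, and picks $w_t\in x_ty_t$ equidistant from $x$ and $y$. Properness enters here, to make $w_{t_i}\to m$ for some midpoint $m\in m(x,y)$---not to accumulate iterated midpoints on a single side. One then combines the Ptolemy inequality for $(p,x_{t_i},m,y_{t_i})$ with the sum of the Ptolemy inequalities for $(x,x_{t_i},w_{t_i},m)$ and $(m,w_{t_i},y_{t_i},y)$, expands to first order in $t_i$ (writing $|mx_t|=a+ta_x(t)$, $|my_t|=a+ta_y(t)$ and using that $a_x,a_y$ have limits in $[-1,1]$ by $1$-Lipschitz convexity), and is forced to $a_x(0)=1$, $a_y(0)=-1$, $d-c=c-b=a$, i.e.\ $|xy|=\bigl||px|-|py|\bigr|$, the desired contradiction. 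The decisive idea you are missing is this secant-and-equidistant-point construction linking both geodesics from $p$; without it, the one-sided midpoints cannot see the slope $\si$ at all.
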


For the proof we need the following elementary 
\begin{lemma}
Let $f:[0,a]\to \mathbb{R}$ be a $1$-Lipschitz convex function with $f(0)=0$. For $t>0$ define $g:(0,a]\to \mathbb{R}$
such that $f(t)=tg(t)$. Then $g(0)=\lim_{t\to 0}g(t)$ exists and $-1\leq g(0)\leq 1$.
\end{lemma}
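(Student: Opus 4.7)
The plan is very short, since this is a classical fact about convex functions packaged for later use.

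First, I would invoke the standard three-slopes property of convex functions: for any convex $f$ with $f(0)=0$, the difference quotient $t\mapsto (f(t)-f(0))/t = f(t)/t = g(t)$ is monotone non-decreasing on $(0,a]$. Concretely, for $0<s<t\le a$, writing $s=(s/t)\cdot t+(1-s/t)\cdot 0$ and applying convexity to $f$ gives $f(s)\le (s/t)f(t)$, i.e. $g(s)\le g(t)$.

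Second, I would use the $1$-Lipschitz hypothesis together with $f(0)=0$: for every $t\in(0,a]$ we have $|f(t)|=|f(t)-f(0)|\le t$, hence $|g(t)|=|f(t)|/t\le 1$, so $g$ takes values in $[-1,1]$.

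Combining these two observations, $g$ is bounded and monotone non-decreasing on $(0,a]$, so the limit
\[
g(0):=\lim_{t\to 0^+} g(t)=\inf_{t\in(0,a]} g(t)
\]
exists and, being an infimum of values in $[-1,1]$, itself lies in $[-1,1]$. There is no real obstacle here; the only thing to be slightly careful about is to take the one-sided limit (from the right), which is forced by the domain $[0,a]$.
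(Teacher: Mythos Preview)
Your argument is correct and is exactly the standard one: monotonicity of the difference quotient from convexity plus the bound $|g(t)|\le 1$ from the $1$-Lipschitz condition yield the existence of the limit in $[-1,1]$. The paper itself calls the lemma ``elementary'' and does not supply a proof, so there is nothing further to compare.
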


\begin{proof} (of the Theorem)
Since we already know that the distance function $d_p$  is convex, it suffices to show that for
$X,y \in X$ with
$|xy| > ||px|-|py||$
there exits a midpoint $m \in m(x,y)$ such that
for $|pm|<\frac{1}{2}(|px|+|py|)$. 
Using this, it is not hard to see that the midpoint is unique.

We choose a geodesic $px$ from $p$ to $x$ and a geodesic $py$ from
$p$ to $y$.
For $t>0$ small, let $x_t\in px$ and $y_t\in py$
be the points with $|x_tx|=t$ and $|y_ty|=t$.
We choose geodesics $x_ty_t$ from $x_t$ to $y_t$.
For fixed $t$ small enough there exists by continuity a point $w_t\in x_ty_t$ with $|xw_t|=|w_ty|$.
By triangle inequality $|xw_t|=|w_ty|\geq a$.
Using the properness of $(X,d)$, it is
elementary to show that there exists a sequence $t_i \to 0$, such that $\lim_{i\to \infty}w_{t_i}=m$ and $m \in m(x,y)$.
Hence the function $\phi(t_i)=|w_{t_i}m| \to 0$ as $t_i \to 0$.

\input{strictconvex.TpX}

Let us assume to the contrary that
\[
  |pm|=\frac{1}{2} (|px|+|py|)
\]
Let $a=\frac{1}{2}|xy|=|xm|=|my|, b=|px|, c=|pm|, d=|py|$ and we assume w.l.o.g that $b\leq c \leq d$.
By our assumption we have $2c=b+d$.
We write
\[
  |mx_t|=a+ta_x(t), |my_t|+a+ta_y(t)
\]
with function $a_x(t),a_y(t)$ according to the Lemma. The PT inequality applied to $p,x_{t_i},m,y_{t_i}$ gives
\begin{enumerate}
  \item $(a+t_ia_x(t_i))(d-t_i)+(a+t_ia_y(t_i))(b-t_i)\geq c|x_{t_i}y_{t_i}|.$ The sum of the PT inequalities
for $x,x_{t_i},w_{t_i},m$ and $m,w_{t_i},y_{t_i},y$ give that
  \item $a(a+t_ia_x(t_i))+a(a+t_ia_y(t_i))\leq a|x_{t_i}y_{t_i}|+2t_i\phi(t_i)$. From (1) and (2) we obtain
  \item $(a+t_ia_x(t_i))(d-t_i)+(a+t_ia_y(t_i))(b-t_i)\geq c((a+t_ia_x(t_i)+a+t_ia_y(t_i))-2\frac{c}{a}t_i\phi(t_i)$.
Note that by the assumption $2c=b+d$. Thus
  \item $(d-c)a_x(0)+(b-c)a_y(0)\geq 2a$. Since $0\leq (d-c)\leq a$ and $0\geq (b-c)\geq -a$
and $-1\leq a_x(0),a_y(0)\leq 1$ this implies that
\item $a_x(0)=1, a_y(0)=-1$ and $d-c=a, c-b=a$. Hence $|xy|=||px|-|py||$ in contradiction
to the assumption.
\end{enumerate}
\end{proof}


\section{4-Point Curvature Conditions} \label{sec:4pt}

In this section we briefly discuss question ({\bf Q}) stated in the introduction.
We discuss it in the context of conditionts for the distance between four points in a
given metric space. We use the following notation.
Let  $M^4$ be the set of isometry classes of 4-point metric spaces.
For a given metric space $X$ let $M^4(X)$ the set of isometry classes of four point subspaces of $X$.
We consider three inequalities between the distances of four points $x,y,z,w$.

The Ptolemaic inequality

\begin{equation} \label{eq:PT1}
|xy|\,|zw|\ \leq\ |xz|\,|yw|\ +\ |xw|\,|yz|
\end{equation}

The inequality

\begin{equation}\label{eq:QI}
|xy|^2\,+\,|zw|^2\ \leq\ |xz|^2\,+\, |yw|^2\,+\,|xw|^2\,+\,|yz|^2
\end{equation}
which is called the {\it quadrilateral inequality} in \cite{BN} and is equivalent to
the 2-roundness condition of Enflo \cite{E}. 

We also consider
the intermediate inequality
\begin{equation}\label{eq:cosq}
|xy|^2\,+\,|zw|^2\ \leq\ |xz|^2\,+\, |yw|^2\,+\, 2\,|xw|\,|yz|
 \end{equation}
With \cite{BN} we call it the {\it cosq} condition.
Let us denote with
$\cA_{PT},\cA_{QI},\cA_{cosq}$ the isometry classes of spaces in $M^4$, such that for all relabelling of the points
$x,y,z,x$ the conditions 
(\ref{eq:PT1}),(\ref{eq:QI}),(\ref{eq:cosq}) hold respectively.
Since always $2ab\le a^2+b^2$ we clearly have
$\cA_{cosq}\sub\cA_{QI}$, but no other inclusion holds:
The space ${x,y,z,w}$ with $|xy|=2$ and all other distances equal to $1$ shows that
$\cA_{PT} \nsubseteq \cA_{QI}$ and the space
${x,y,z,w}$ with
$|xy|=|zw|=2$, $|xz|=|xw|=1$ and $|yz|=|yw|=a$ with $1<a<2$ and $a$ very close to $2$ shows
$\cA_{cosq}\nsubseteq \cA_{PT}$.

A $\CAT(0)$-space satisfies all condition 
(\ref{eq:PT1}),(\ref{eq:QI}),(\ref{eq:cosq}), i.e. 
$M^4(X) \sub \cA_{cosq} \cap \cA_{PT}$ (see \cite{FLS}, \cite{BFW}).

Berg and Nikolaev (\cite{BN}, compare also\cite{Sa}) proved a beautiful
characterization of $\CAT(0)$ spaces:

A geodesic metric space $X$ is $\CAT(0)$ if and only if al quadruples in
$X$ satisfy the quadrilateral condition (\ref{eq:QI}).

This implies also the following characterization:

A geodesic metric space $X$ is $\CAT(0)$ if and only if al quadruples in
$X$ satisfy the cosq condition (\ref{eq:cosq}).

Formally speaking \cite{BN} proves:
if $X$ is a geodesic metric space with
$M^4(X) \sub \cA_{QI}$, then $X$ is $\CAT(0)$.

The question (Q) asks for a similar characterization
in terms of the PT condition.
In \cite{FLS} we gave examples of geodesic PT which are
not $\CAT(0)$. 
Since these examples are not proper, they leave the question
(Q) open.
Actually in proper geodesic PT the distance function to a point
is strictly convex, see Theorem \ref{thm:sdc}, thus there is some
plausibility for a positive answer to the question.
Our result is another indication in this direction.

Finally we remark that in \cite{FLS} we characterized $\CAT(0)$ sapces
by the property that they are geodesic PT spaces which are in addition Busemann
convex.


\bigskip
\begin{tabbing}

Renlong Miao,\hskip10em\relax \= Viktor Schroeder,\\ 

Institut f\"ur Mathematik,\>
Institut f\"ur Mathematik, \\

Universit\"at Z\"urich,\> Universit\"at Z\"urich,\\
Winterthurer Strasse 190, \>
 Winterthurer Strasse 190, \\

CH-8057 Z\"urich, Switzerland\>  CH-8057 Z\"urich, Switzerland\\

{\tt mrenlong1988@gmail.com}\> {\tt vschroed@math.uzh.ch}\\

\end{tabbing}

\end{document}